\newcommand{\Z}{\mathbb Z}            
\newcommand{\R}{\mathbb R}            
\newcommand{\N}{\mathbb N}            
\newcommand{\C}{\mathbb C}            
\renewcommand{\a}{\alpha}
\DeclareMathOperator{\ch}{char}
\DeclareMathOperator{\gr}{gr}
\DeclareMathOperator{\ord}{ord}
\DeclareMathOperator{\typ}{typ}
\def\lla{      \mathrel{ \mathop{\le}\limits^{\rm a}   }}
\newcommand{\CO}{\mathcal C}          
\newcommand{\WW}{\mathbf W}           
\newcommand{\CC}{\mathbf C}           
\renewcommand{\AA}{\mathbf A}         
\newcommand{\NN}{\mathbf N}           
\newcommand{\MM}{\mathbf M}           
\newcommand{\VV}{\mathbf V}           
\newcommand{\LL}{\mathbf L}           
\newcommand{\EE}{\mathbf E}           
\newcommand{\PP}{\mathbf P}           
\renewcommand {\limsup}{\operatorname* {\overline{lim}}}
\newcommand{\sh}{\mathop{\mathrm {sh}}}
\newtheorem{Th}{Theorem}[section]
\newtheorem{Lemma}[Th]{Lemma}
\newtheorem{Corr}[Th]{Corollary}
\newtheorem{Remark}{Remark}
\begin{document}
\title{Scale for codimension growth of Poisson PI-algebras}
\author{Victor Petrogradsky}
\address{Department of Mathematics, University of Brasilia, 70910-900 Brasilia DF, Brazil}
\email{petrogradsky@rambler.ru}
\subjclass[2000]{16R10, 17B63, 17B50, 17B01, 17B30, 17B65}
\keywords{PI-algebras; codimension growth; varieties of linear algebras;
identical relations; Poisson algebras;  solvable Lie algebras}
\begin{abstract}
A.Regev proved that the codimension growth of an associative PI-algebra is at most exponential.
The author established a scale for the codimension growth of Lie PI-algebras,
which includes a series of functions between exponential and factorial functions.
We prove that the same scale stratifies the ordinary codimension growth of Poisson PI-algebras satisfying Lie identical relations.
As a byproduct, we obtain a new bound on the codimension growth of Lie PI-algebras in terms of complexity functions.
We also study very fast codimension growth of some Poisson algebras without Lie identities.
We essentially use techniques of exponential generating functions and growth of respective fast growing entire functions.
\end{abstract}

\maketitle
An important instrument to study structure and properties of associative algebras is the theory of identical relations.
Now, this is the established area of the modern algebra~\cite{Drensky,GiaZai}.
The theory of identical relations in Lie algebras
has many applications to group theory such as the solution of the Restricted Burnside Problem~\cite{Ba,Vau}.
Also, the identical relations are studied in another algebraic structures.

Poisson algebras were introduced in 1976 by Berezin~\cite{Ber67}, see also Vergne~\cite{Ver69} (1969).
Poisson algebras naturally appear in different areas of algebra, topology and physics.
Using Poisson algebras, Shestakov and Umirbaev solved a long-standing problem
on the Nagata automorphism of the polynomial ring in three variables $\mathbb{C}[x,y,z]$, they proved that it is wild~\cite{SheUmi04}.
Different algebraic properties of free Poisson algebras were studied by
Makar-Limanov, Shestakov and Umirbaev~\cite{MakShe12,MakUmi11}.

We fix basic notations. By $K$ denote the ground field of an arbitrary characteristic.
By $\langle S\rangle$ or $\langle S\rangle_K$ denote the linear span of a subset $S$ in a $K$-vector space.
Let $L$ be a Lie algebra.
The Lie brackets are left-normed: $[x_1,\ldots,x_n]=[[x_1,\dots,x_{n-1}],x_n]$, $n\ge 1$.
By $U(L)$ denote the universal enveloping algebra.

\section{Introduction: Codimension growth of linear PI-algebras and Poisson algebras}

\subsection{Codimension growth of associative PI-algebras}
Let $A$ be an (associative, or another type) algebra satisfying a nontrivial identical relation, then $A$ is called a {\it PI-algebra}.
Let $P_n(X_1,\ldots,X_n)$ be the span of all multilinear monomials in $X_1,\ldots,X_n$
in the free (associative) algebra $A$ generated by $X=\{X_i|i\in \N\}$.
Denote by $W_n\subset P_n$ all elements that are identical relations in $A$.
A.Regev introduced so called {\it codimension sequence} $\{c_n(A):=\dim P_n/W_n\mid n\ge 0\}$.
\begin{Th}[Regev \cite{Regev72}] Let $A$ be an associative PI-algebra.
Then there exists $a>1$ such that $c_n(A)<a^n$ for $n\in\N$.
\end{Th}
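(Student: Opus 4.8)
The plan is to follow the combinatorial argument due to Latyshev. I first pass to a multilinear identity: from a nontrivial identity of degree $d$ of $A$, a standard (partial) linearization produces a nonzero multilinear identity $f=\sum_{\sigma\in S_{d}}\lambda_{\sigma}\,X_{\sigma(1)}\cdots X_{\sigma(d)}$ of some degree, which I again denote by $d$ (here $S_{d}$ is the symmetric group). Renaming the variables if necessary, I may assume $\lambda_{\mathrm{id}}\neq 0$, so that in $A$ the monomial $X_{1}X_{2}\cdots X_{d}$ equals a linear combination of the monomials $X_{\sigma(1)}\cdots X_{\sigma(d)}$ with $\sigma\neq\mathrm{id}$. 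Substituting arbitrary monomials $u_{1},\dots,u_{d}$ for $X_{1},\dots,X_{d}$ (legitimate since $f$ is an identity), the same relation holds with each $X_{i}$ replaced by $u_{i}$; this "straightening rule" is the only consequence of the PI-property I shall use.

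The heart of the matter is a spanning statement. For $\sigma\in S_{n}$ call the multilinear monomial $X_{\sigma(1)}\cdots X_{\sigma(n)}$ \emph{$d$-bad} if the word $\sigma(1),\dots,\sigma(n)$ contains a decreasing subsequence of length $d$, i.e.\ there are positions $i_{1}<\cdots<i_{d}$ with $\sigma(i_{1})>\cdots>\sigma(i_{d})$. I claim that modulo $W_{n}$ the space $P_{n}$ is spanned by the monomials coming from permutations that are \emph{not} $d$-bad. To prove this I attach to each $d$-bad $\sigma$ the lexicographically largest length-$d$ decreasing subsequence of its word, well-order the $d$-bad monomials by the lexicographic order of this invariant, and argue by induction starting from the smallest. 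Given a $d$-bad $\sigma$, cut the word $X_{\sigma(1)}\cdots X_{\sigma(n)}$ at the positions of a witnessing decreasing subsequence into $d$ consecutive blocks $u_{1},\dots,u_{d}$, the block $u_{j}$ ending at position $i_{j}$, followed by a (possibly empty) tail $v$, and apply the straightening rule to the product $u_{1}\cdots u_{d}$, leaving $v$ at the end. Since the witness letters $\sigma(i_{1})>\cdots>\sigma(i_{d})$ strictly decrease along the word, one shows that every reordered monomial $u_{\tau(1)}\cdots u_{\tau(d)}v$ with $\tau\neq\mathrm{id}$ either fails to be $d$-bad or has strictly smaller invariant; hence by the induction hypothesis $X_{\sigma(1)}\cdots X_{\sigma(n)}$ lies in $W_{n}$ plus the span of the non-$d$-bad monomials.

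Finally I count. If $\sigma\in S_{n}$ is not $d$-bad, then by the Robinson--Schensted correspondence its insertion tableau has at most $d-1$ rows (equivalently, by Dilworth's theorem $\sigma$ is a union of at most $d-1$ increasing subsequences); recording, for each of $1,\dots,n$, the row of its box in each of the two tableaux shows that the number of such $\sigma$ is at most $(d-1)^{2n}$. Therefore $c_{n}(A)=\dim P_{n}/W_{n}\le (d-1)^{2n}$ for all $n$, and the theorem holds with $a=\max\{2,(d-1)^{2}\}$, the cases $d\le 2$ being trivial. (Regev's original route, via the fact that a tensor product of PI-algebras is PI together with hook-type bounds on dimensions of irreducible $S_{n}$-representations, gives the same conclusion.)

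The step I expect to be the real obstacle is the middle one: choosing the partial order on monomials and a canonical witnessing decreasing subsequence for which the straightening rule provably returns only monomials that are non-$d$-bad or have strictly smaller invariant. The reduction to a multilinear identity and the final Robinson--Schensted count are, by comparison, routine.
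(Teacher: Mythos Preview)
The paper does not prove this theorem: it is quoted as a background result with a citation to \cite{Regev72}, and no argument is supplied in the text. There is therefore nothing in the paper to compare your proposal against.

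Your outline is the standard Latyshev proof. The reduction to a multilinear identity with $\lambda_{\mathrm{id}}\neq 0$ and the Robinson--Schensted count of $d$-good permutations (yielding $c_n(A)\le (d-1)^{2n}$) are both correct as stated. You are also right that the straightening step is where the real work lies, and as written it is only a sketch: the invariant you propose---the lexicographically largest length-$d$ decreasing subsequence of values---is not guaranteed to strictly decrease after a nontrivial block permutation, since after reshuffling the blocks a \emph{different} length-$d$ decreasing subsequence may appear whose value-tuple is lexicographically at least as large as the old one. The argument can be completed, but one must choose the canonical witness and the partial order more carefully (for instance, normalise so that the coefficient of the decreasing monomial $X_dX_{d-1}\cdots X_1$ is the nonzero one and take the leftmost witnessing subsequence, ordering by its position tuple); full details appear in the textbooks already cited by the paper, e.g.\ Drensky \cite{Drensky} or Giambruno--Zaicev \cite{GiaZai}. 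Your parenthetical about ``Regev's original route'' is slightly backwards: the exponential codimension bound is the input to the tensor-product theorem, not a consequence of it.
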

This sequence proved to be useful to study associative algebras.
So, A.Regev proved that the tensor product of associative PI-algebras is a PI-algebra~\cite{Regev72}.
A.~Giambruno and M.~Zaicev proved that a "precise" exponent
for an associative PI-algebra in characteristic zero always exists and is an integer~\cite{GiaZa99},
see also the monograph~\cite{GiaZai}.

\subsection{Codimension growth of Lie PI-algebras}
Let $\VV$ be a variety of Lie algebras, it is immediate that $c_n(\VV)\le (n-1)!$ for $n\ge 1$.
\begin{Th} [A.Grishkov \cite{Grishkov88}]
Let $L$ be a Lie algebra satisfying a nontrivial identity.
Then for any $r>1$ there exists $N_0$ such that
$$ c_n(L)\le \frac {n!}{r^n},\quad n\ge N_0.$$
\end{Th}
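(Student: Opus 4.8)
The plan is to recast the statement analytically, following the paper's viewpoint. Since $c_n(L)\le (n-1)!$ always, the bound $c_n(L)\le n!/r^n$ for all $n\ge N_0$, holding for every $r>1$, is equivalent to $\limsup_{n\to\infty}\bigl(c_n(L)/n!\bigr)^{1/n}=0$, i.e. (by Cauchy--Hadamard) to the assertion that the exponential generating function
$$\mathcal E_L(z)=\sum_{n\ge 0}\frac{c_n(L)}{n!}\,z^n$$
is entire. As $c_n$ depends only on $\mathfrak M=\mathrm{var}(L)$, a proper subvariety of the variety of all Lie algebras, the goal becomes: the exponential codimension generating function of any proper variety of Lie algebras is an entire function.

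First I would normalise the identity. The multilinear component $P_d$ of the free Lie algebra has dimension $(d-1)!$ with the left-normed basis $\{[X_1,X_{\sigma(2)},\dots,X_{\sigma(d)}]:\sigma\in\mathrm{Sym}\{2,\dots,d\}\}$, so any nonzero multilinear identity of $L$ of degree $d$ can, after relabelling and rescaling the variables, be written as
$$\phi(Y,X_1,\dots,X_{d-1})=[Y,X_1,\dots,X_{d-1}]-\sum_{\sigma\ne\mathrm{id}}a_\sigma[Y,X_{\sigma(1)},\dots,X_{\sigma(d-1)}]\in\mathrm{Id}(L).$$
Substituting (left-normed) brackets for $Y$ and the $X_i$ and multiplying on the right, $\phi$ lets one rewrite, inside any left-normed monomial $[X_{i_1},\dots,X_{i_n}]$, a window of $d-1$ consecutive entries lying in increasing order as a combination of monomials in which that window is nontrivially permuted. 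After fixing a monomial order on nonassociative words to make this rewriting terminate, one obtains a spanning set of $P_n/W_n$, hence a bound on $c_n(L)$, by a family of ``$\phi$-reduced'' bracket structures on $n$ labelled points.

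The second stage is to pass to generating functions and to squeeze out the full conclusion. The crude one-level reduction already bounds $c_n(L)$ by a constant times $\rho_d^{\,n}\,n!$, where $\rho_d<1$ is the exponential growth rate of the number of permutations with no increasing run of length $d-1$; but $\rho_d\to 1$ as $d\to\infty$, so this by itself does not even give $c_n\le n!/r^n$ for large $r$, let alone for all $r$. To reach an entire generating function one must exploit that $\mathfrak M$ is closed under all consequences of $\phi$: I would decompose the relatively free algebra of $\mathfrak M$ via the Lazard elimination process and apply the $\phi$-reduction recursively at every level of the resulting nested brackets, so that the reduced count $f_n$ has a generating function $\mathcal F(z)=\sum f_n z^n/n!\ge\mathcal E_L(z)$ satisfying a fixed-point relation of the schematic form $\mathcal F=z+\Psi(\mathcal F)$. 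Iterating, one bounds $\mathcal F$, for every $k$, by a $k$-fold composition $\mathcal G\circ\cdots\circ\mathcal G$ of a fixed power series with $\mathcal G(0)=0$; since the constraint bites at every node of a degree-$n$ bracket, letting $k\to\infty$ forces the exponential rate of $f_n/n!$ to $0$, so $\mathcal F$, hence $\mathcal E_L$, is entire. Concretely this amounts to the intermediate structural statement that $\mathfrak M$ is contained in a finite product of varieties each of which is nilpotent or has at most exponential codimension growth, the exponential generating function of such a product being a composition of entire functions.

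I expect the main obstacle to be precisely this last point: extracting from one identity $\phi$ — really from the whole $T$-ideal it generates — an estimate with exponential rate $0$ rather than the harmless-looking but insufficient rate $\rho_d<1$ of the naive rewriting. The surrounding bookkeeping (termination of the reduction via a well-ordering on nonassociative words, correctness of the composition count, and the analytic estimate that the resulting power series has infinite radius of convergence) is where the real work lies; the reduction to $P_d$ of the free Lie algebra and the Lazard elimination are standard.
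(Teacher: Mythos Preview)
Your reformulation (entire-ness of the complexity function) and the identity normalisation are correct and match the paper's viewpoint. You also correctly isolate the crux: one pass of the rewriting gives at best $c_n\le C\rho_d^{\,n}n!$ for a fixed $\rho_d<1$, not $\limsup(c_n/n!)^{1/n}=0$. But your proposed resolution is where the gap lies. The recursive Lazard-elimination scheme and the ``fixed-point relation $\mathcal F=z+\Psi(\mathcal F)$'' are never made concrete, and the structural claim you fall back on --- that any proper Lie variety embeds in a finite product of nilpotent varieties and varieties of exponential codimension growth --- is unsubstantiated and is not the mechanism behind the theorem. As written, the proposal describes the difficulty rather than resolving it.

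The paper (following Razmyslov) supplies the missing combinatorics directly, with no structural decomposition of the variety. Call a multilinear word $w$ \emph{$m$-Lie decomposable} if $w=a\,w_m\cdots w_1\,b$ with each $w_i$ regular (starting with its maximal letter) and $w_m>\cdots>w_1$; substituting the bracketings $[w_i]$ into the normalised identity rewrites $[w]$ via lexicographically smaller regular words, so $P_n$ is spanned by $[w]$ with $w$ regular and $m$-Lie \emph{in}decomposable. The idea you are missing is that these indecomposable words admit a recursion \emph{in the parameter $m$}, not in any Lie-algebraic filtration: a regular $m$-Lie indecomposable word is $x\cdot u$ with $x$ its maximal letter and $u$ an $(m{-}1)$-Lie indecomposable word, and a general $m$-Lie indecomposable word is an increasing PBW-type product of regular ones. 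Their exponential generating functions therefore satisfy $q_1=1$, $r_m(z)=\int_0^z q_{m-1}$, $q_m=\exp r_m$, yielding the coefficientwise majorant $\mathcal C(\mathbf V,z)\preceq r_m(z)\preceq z\exp(z\exp(\cdots z\exp z\cdots))$ with $m{-}2$ exponentials --- an explicit entire function, which is Grishkov's theorem with a quantitative bound on top.
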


Yu.P.Razmyslov suggested to consider an exponential generating function, called {\it complexity function}
as $\CO(L,z)=\sum_{n\ge 1} \frac {c_n(L)}{n!}z^n$.
The result above was equivalently rewritten as:
\begin{Th} [Yu.P.Razmyslov~\cite{Razmyslov}]\label{Traz}
Let $L$ be a Lie algebra satisfying a nontrivial identity.
Then $\CO(L,z)$ is an entire function of complex variable.
\end{Th}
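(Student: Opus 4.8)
The plan is to show that Razmyslov's Theorem~\ref{Traz} is simply Grishkov's theorem restated in the language of exponential generating functions, so the real content is to translate the coefficient bound $c_n(L)\le n!/r^n$ (for all $r>1$, eventually) into analytic entireness of $\CO(L,z)=\sum_{n\ge1}\frac{c_n(L)}{n!}z^n$. Recall that a power series $\sum a_n z^n$ defines an entire function precisely when its radius of convergence is $+\infty$, equivalently when $\limsup_{n\to\infty}|a_n|^{1/n}=0$. Here $a_n=c_n(L)/n!$, so the goal is to prove $\limsup_{n\to\infty}\bigl(c_n(L)/n!\bigr)^{1/n}=0$.

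First I would invoke Grishkov's theorem (stated above) to get, for each fixed real $r>1$, an integer $N_0=N_0(r)$ with $c_n(L)\le n!/r^n$ for all $n\ge N_0$. Dividing by $n!$ yields $c_n(L)/n!\le r^{-n}$ for $n\ge N_0$, hence $\bigl(c_n(L)/n!\bigr)^{1/n}\le 1/r$ for all such $n$, and therefore $\limsup_{n\to\infty}\bigl(c_n(L)/n!\bigr)^{1/n}\le 1/r$. Since $r>1$ was arbitrary and can be taken arbitrarily large, we conclude $\limsup_{n\to\infty}\bigl(c_n(L)/n!\bigr)^{1/n}=0$. By the Cauchy--Hadamard formula the radius of convergence of $\CO(L,z)$ is infinite, so the series converges for every $z\in\C$ and defines an entire function. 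Conversely — though this direction is not needed for the stated implication — entireness of $\CO(L,z)$ would force the same $\limsup$ condition and hence recover Grishkov's estimate, which is why the two theorems are "equivalent'' as the text indicates.

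There is essentially no obstacle here: the statement is a dictionary translation, and the only thing to be careful about is the quantifier order — Grishkov's bound holds for every $r>1$ with an $r$-dependent threshold $N_0(r)$, and it is exactly this "for all $r$'' that upgrades a sub-factorial bound into the vanishing of the $\limsup$, which is the analytic hallmark of an entire (rather than merely finite-order-pole-free or finite-radius) function. If one wanted to extract more, the natural next step — and presumably the point of introducing complexity functions at all — is to measure \emph{how fast} $\CO(L,z)$ grows as an entire function (its order and type), and to relate finer identities satisfied by $L$ to membership of $\CO(L,z)$ in narrower classes of entire functions; but for the bare assertion that $\CO(L,z)$ is entire, the Cauchy--Hadamard argument above suffices.
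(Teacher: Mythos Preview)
Your proposal is correct and matches the paper's treatment: the paper does not give a separate proof of Theorem~\ref{Traz} but simply presents it as Grishkov's theorem ``equivalently rewritten'' in the language of complexity functions, and your Cauchy--Hadamard argument is precisely the standard dictionary that justifies that equivalence.
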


Recall standard definitions of varieties of Lie algebras~\cite{Ba}.
One defines the {\it lower central series}: $\gamma_1(L)=L$, $\gamma_{n+1}(L)=[\gamma_{n}(L), L]$, $n\ge 1$.
In particular, $L^2=[L,L]=\gamma_2(L)$ is the {\it commutator subalgebra}.
A Lie algebra is {\it nilpotent} of class $s$ provided that $\gamma_{s+1}(L)=0$ while $\gamma_{s}(L)\ne 0$.
All Lie algebras nilpotent of class at most $s$ form a variety denoted by $\NN_s$.

A Lie algebra $L$ is {\it solvable} of length $q$ provided that there exists
a chain of ideals $0=L_{q+1}\subset L_q\subset\dots\subset L_2\subset L_1=L$
such that $L_i^2\subset L_{i+1}$, $i=1,\dots,q$, and $q$ is minimal with these conditions.
All Lie algebras solvable of length at most $q$ is a variety denoted by $\AA^q$.

More generally,  a Lie algebra $L$ is {\em polynilpotent} with a fixed tuple of natural numbers
$(s_q,\dots,s_2,s_1)$~\cite{Ba} provided that there exists  a chain of ideals
$0=L_{q+1}\subset L_q\subset\dots\subset L_2\subset L_1=L$, where  $L_i/L_{i+1}$ is nilpotent of class $s_i$, $i=1,\dots,q$.
Denote by $\NN_{s_q}\cdots \NN_{s_2}\NN_{s_1}$  the class of all such Lie algebras.
As the particular case $s_q=\cdots=s_1=1$ we get the variety  ${\AA}^q$ of solvable Lie algebras of length~$q$.
Let us make the following observation. First, any polynilpotent Lie algebra is solvable.
Second, the {\em free polynilpotent} Lie algebras (the tuple being fixed) are particular interesting examples of solvable Lie algebras.

The author constructed the following hierarchy for the codimension growth of Lie algebras that consists
of a series of subfactorial functions~\cite{Pe95}:
\begin{equation}\label{scale}
\Psi^q_\a(n)=
\left\{
\begin{array}{lll}
\displaystyle
\big(n!\big)^{1-1/\a},\quad & \a\ge 1,\quad & q=2;\\
\displaystyle
\frac{n!}{(\ln^{(q-2)}n)^{n/\a}},\quad & \a>0,\quad & q=3,4,\dots
\end{array}
\right.
\end{equation}
The origin of these functions is due to the following result.
It is derived from the study of the respective complexity functions and their growth
(see Theorem~\ref{TordNN}).

\begin{Th}[{\cite{Pe95,Pe97msb}}]
Let $\AA^q$ be the variety of solvable Lie algebras of length $q\ge 3$. Then
$$ c_n(\AA^q)=
  \frac{n!}{(\ln^{(q-2)}n)^{n}}
                  \left(1+o(1)\right)^{n},\qquad n\to\infty.$$
\end{Th}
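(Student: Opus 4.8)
The plan is to pass to complexity functions and reduce the statement to a precise asymptotic analysis of the entire function $\CO(\AA^q,z)$. First I would recall (from Theorem~\ref{TordNN}, whose statement is promised in the excerpt) that the complexity function of the free solvable Lie algebra of length $q$ behaves, roughly, like an iterated-exponential tower of height $q-1$; more precisely, one expects $\CO(\AA^q,z)$ to grow like $\exp^{(q-1)}$ of a linear function, in the sense that its maximal modulus $M(r)=\max_{|z|=r}|\CO(\AA^q,z)|$ satisfies $\ln^{(q-2)} M(r) \sim c\,r$ as $r\to\infty$, after sharpening the constants. The whole point is that the order-type (or ``lower order'') of this entire function is captured by the iterated logarithm $\ln^{(q-2)}$, and that the \emph{exact} growth rate — not merely its logarithmic asymptotics — can be pinned down because the free solvable algebra is built by iterated wreath-type constructions from the free abelian (where the complexity function is simply $e^z-1$) and each enveloping-algebra step exponentiates.

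Second, I would invoke a Tauberian-type transfer theorem relating the growth of an entire function $f(z)=\sum a_n z^n/n!$ with nonnegative coefficients to the asymptotics of $a_n$ itself. The relevant tool is the saddle-point / Cauchy-estimate bridge: if $\ln^{(q-2)}M(r)=cr(1+o(1))$, then after taking the $n$-th Cauchy coefficient and optimizing the radius $r=r_n$ one gets
\[
  \frac{c_n(\AA^q)}{n!} = \Bigl(\frac{1}{(\ln^{(q-2)} n)}\Bigr)^{n}\bigl(1+o(1)\bigr)^{n},
\]
which is exactly the claimed formula after multiplying through by $n!$. The iterated logarithm appears precisely as the inverse, at the saddle point, of the iterated exponential governing $M(r)$; this is the same mechanism that produces $(n!)^{1-1/\alpha}$ in the $q=2$ case of the scale~\eqref{scale}, and it is worked out in the author's earlier papers~\cite{Pe95,Pe97msb}.

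Third, one must upgrade from the \emph{free} solvable Lie algebra to an arbitrary algebra in the variety $\AA^q$: the upper bound $c_n(\AA^q)\le c_n(F_q)$ for the relatively free algebra $F_q$ of countable rank is immediate, and the matching lower bound requires exhibiting enough independent multilinear elements in $F_q$ — typically via explicit evaluations in wreath products of abelian Lie algebras, or via the generating-function lower bounds already established for $\CO(\AA^q,z)$. The main obstacle I anticipate is the two-sided sharpness of the constant inside $(1+o(1))^n$: getting $\ln^{(q-2)}M(r)\le (c+\varepsilon)r$ is comparatively soft (it follows by iterating the crude bound $\CO(\NN_s\V,z)\le \exp(\CO(\V,z))$ coming from passage to universal envelopes), but the lower bound of the same shape demands a genuinely precise combinatorial count of multilinear monomials surviving in the free polynilpotent algebra, i.e. control of the kernel $W_n$ from below — and then a delicate Laplace-method estimate to see that these counts are concentrated near the saddle radius rather than diluted by a subexponential-in-$n$ factor that would nonetheless spoil an $(1+o(1))^n$ claim. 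That estimate, together with keeping the iterated logarithms synchronized between the $M(r)$-asymptotics and the saddle point $r_n$, is where the real work lies; everything else is bookkeeping with exponential generating functions.
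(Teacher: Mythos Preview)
Your overall strategy is the one used in \cite{Pe97msb} and recapitulated in this paper via Theorems~\ref{TordNN} and~\ref{TSher}: determine the growth of the entire function $\CO(\AA^q,z)$, then transfer to coefficient asymptotics by an Adamard-type formula for functions of infinite order. Two corrections are needed. First, there is an indexing slip: you correctly say that $M(r)$ grows like $\exp^{(q-1)}$ of a linear function, but then write $\ln^{(q-2)}M(r)\sim cr$; it must be $\ln^{(q-1)}M(r)/r\to 1$, which is precisely Theorem~\ref{TordNN} with $s_1=s_2=1$, and the bridge of Theorem~\ref{TSher} then drops \emph{one} logarithm to produce $\ln^{(q-2)}n$ on the coefficient side. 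Second, your Step~3 is superfluous and rests on a misreading: by definition $c_n(\AA^q)$ \emph{is} the dimension of the multilinear component of the relatively free algebra $F(\AA^q,X)$, so there is no ``upgrade from $F_q$ to the variety'' to perform --- $c_n(\AA^q)=c_n(F_q)$ identically. The genuine concern you raise, two-sided sharpness inside the $(1+o(1))^n$, is exactly the point: Theorem~\ref{TSher} as stated equates two $\limsup$'s, so the upper bound on $c_n$ follows immediately from Theorem~\ref{TordNN}, while the lower bound requires either the companion $\liminf$ transfer or a direct lower estimate on multilinear monomials in $F(\AA^q,X)$; both routes are carried out in \cite{Pe97msb}, not via saddle-point analysis per se but through the author's order/type calculus for fast-growing entire functions.
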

Recall that the class of solvable Lie algebras $\AA^q$ of length $q$
is a particular case of the variety of polynilpotent Lie algebras $\NN_{s_q}\cdots\NN_{s_1}$.
A more general statement is as follows.
\begin{Th}[{\cite{Pe97msb}}] \label{Tpoly}
Consider a tuple of integers $(s_q,\ldots,s_1)$, where $q\ge 2$, and the respective variety of polynilpotent Lie algebras
$\VV=\NN_{s_q}\cdots\NN_{s_1}$. Then
$$ c_n(\VV)=
  \left\{
  \begin{array}{ll}
  \big(n!\big)^{1-1/s_1} (s_2+o(1))^{n/s_1};\     &q=2;\\
  \displaystyle
  \frac{n!}{(\ln^{(q-2)}n)^{n/s_1}}
                  \left(\frac{s_2+o(1)}{s_1}
                  \right)^{n/s_1}\!\!\!\!\!;\qquad
                                         &q=3,4,\dots,
  \end{array}
  \right.\quad n\to\infty.$$
\end{Th}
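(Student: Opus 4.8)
\medskip
\noindent\textbf{A plan for the proof.}
The strategy is to package the whole codimension sequence into a single entire generating function, compute that function by a recursion reflecting the polynilpotent tower, and then extract the asymptotics of its Taylor coefficients. First I would pass to the relatively free algebra $F$ of the variety $\VV=\NN_{s_q}\cdots\NN_{s_1}$ on a countable set of generators $X=\{X_i\mid i\in\N\}$, so that $c_n(\VV)=c_n(F)$ is the dimension of the multilinear component of degree $n$ of $F$; then the theorem becomes an asymptotic statement for the complexity function $\CO(F,z)=\sum_{n\ge1}\frac{c_n(F)}{n!}z^n$, which is entire by Theorem~\ref{Traz}.

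The structural input is the Magnus--Shmelkin description of free polynilpotent Lie algebras as iterated extensions. Writing $F^{(j)}$ for the relatively free algebra of $\NN_{s_j}\cdots\NN_{s_1}$, there is a short exact sequence $0\to N\to F^{(j)}\to F^{(j-1)}\to0$ in which the bottom ideal $N$ is (in the appropriate Magnus--Shmelkin sense) the free nilpotent-of-class-$s_j$ object over the enveloping algebra $U(F^{(j-1)})$ built on the augmentation-kernel module $M=\ker\!\big(U(F^{(j-1)})^{(X)}\twoheadrightarrow U(F^{(j-1)})^{+}\big)$, where $U(F^{(j-1)})^{(X)}$ is the free right module on $X$ and $U(F^{(j-1)})^{+}$ the augmentation ideal. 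I would then translate this into generating functions for multilinear dimensions via three dictionary entries: $\widehat{U(L)}(z)=\exp\big(\CO(L,z)\big)$ (PBW together with the exponential formula, valid in any characteristic); $\widehat M(z)=(z-1)e^{\CO(F^{(j-1)},z)}+1$ (exactness of the augmentation sequence, using $\widehat{U(L)^{(X)}}(z)=z\,\widehat{U(L)}(z)$); and the plethystic identity that the complexity function of the free $\NN_s$-object on a generating species with exponential generating function $\widehat M$ equals $\sum_{m=1}^{s}\widehat M(z)^{m}/m$ (the free $\NN_s$-operad has $c_m=(m-1)!$ for $m\le s$ and $0$ otherwise). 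Setting $g_j:=\CO(F^{(j)},z)$ this yields the recursion
\[
g_j=g_{j-1}+\sum_{m=1}^{s_j}\frac{1}{m}\big((z-1)e^{g_{j-1}}+1\big)^{m},\qquad g_1=\sum_{m=1}^{s_1}\frac{z^m}{m},
\]
and in particular $g_j\sim\frac{z^{s_j}}{s_j}e^{s_j g_{j-1}}$ as $z\to+\infty$, so that $g_q$ is entire of ``$(q-1)$-fold iterated-exponential'' type. (This is exactly the companion statement on complexity functions referred to before Theorem~\ref{Tpoly}, cf.\ Theorem~\ref{TordNN}.)

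It remains to pass from the growth of $g_q$ to the asymptotics of $c_n(\VV)=n!\,[z^n]g_q(z)$. For $q=2$ the function $g_2$ is, up to lower-order corrections, the lacunary series $\frac{z^{s_2}}{s_2}\exp\!\big(\frac{s_2}{s_1}z^{s_1}\big)$, whence $[z^n]g_2\sim(s_2/s_1)^{n/s_1}/(n/s_1)!$; multiplying by $n!$ and applying Stirling to $n!/(n/s_1)!$ produces the factor $s_1^{n/s_1}$, which cancels the $1/s_1$ and gives $(n!)^{1-1/s_1}(s_2+o(1))^{n/s_1}$. For $q\ge3$ I would use the saddle-point method: $[z^n]g_q$ equals $\min_{r>0}g_q(r)r^{-n}$ up to a subexponential factor, with saddle equation $r\,g_{q-1}'(r)\approx n/s_q$. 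Telescoping $r\,g_j'(r)\sim\big(\prod_{i\le j}s_i\big)\prod_{k\le j}g_k(r)$ and inverting $g_{j-1}\sim\frac1{s_j}\ln g_j$ locates the saddle so that $g_{q-1}(r)\sim n/\ln n$ and, crucially, $\ln r\sim\frac1{s_1}\big(\ln^{(q-1)}n+\ln(s_1/s_2)\big)$; substituting into $g_q(r)r^{-n}$, using $\ln^{(q-1)}n=\ln\big(\ln^{(q-2)}n\big)$ and the fact that $s_q g_{q-1}(r)$ contributes only $o(n)$ to the exponent, yields exactly $c_n(\VV)=\dfrac{n!}{(\ln^{(q-2)}n)^{n/s_1}}\Big(\dfrac{s_2+o(1)}{s_1}\Big)^{n/s_1}$. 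Since $c_n(\VV)=c_n(F)$ on the nose, the single generating-function computation delivers matching upper and lower bounds simultaneously.

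The main obstacle is the asymptotic analysis of the last paragraph, and within it obtaining the precise base. Making the saddle-point estimate rigorous for an entire function of iterated-exponential type (uniform control of the error and of the position of the saddle) already needs care, but the delicate point is tracking the leading-coefficient data — the factor $1/s_j$ from each prefactor $z^{s_j}/s_j$, and the coefficient $1/s_1$ of the bottom polynomial $g_1$ — through the $q$-fold iteration accurately enough to pin the base down as $\frac{s_2+o(1)}{s_1}$ rather than an unidentified $(C+o(1))^{n/s_1}$: one must show that only the bottom class $s_1$ (entering through $\ln r$) and the next class $s_2$ (entering through the coefficient $s_2/s_1$ of the innermost exponential) reach the first two asymptotic layers, while $s_3,\dots,s_q$ are absorbed into the $o(n)$ in the exponent. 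A cleaner organization, which I would follow, is to establish the growth statement for $g_q$ first and then invoke a general correspondence between the growth order and type of an entire generating function in this ``subfactorial'' range and the asymptotics of its coefficients; this also isolates, as a routine separate check, the validity of the Magnus--Shmelkin structure and of the PBW-based dictionary over a field of arbitrary characteristic.
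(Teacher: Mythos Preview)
The paper does not actually prove Theorem~\ref{Tpoly}; it is quoted as a result from~\cite{Pe97msb}. What the paper does record are the two ingredients of that proof: Theorem~\ref{TordNN}, which says that the complexity function $f(z)=\CO(\NN_{s_q}\cdots\NN_{s_1},z)$ has $\ord_{q-1}f=s_1$ and $\typ_{q-1}f=s_2/s_1$, and Theorem~\ref{TSher}, the higher-level Hadamard formula that converts exactly this growth data into the coefficient asymptotic for $q\ge 3$ (the case $q=2$ being handled by the classical Hadamard correspondence). Your ``cleaner organization'' at the end --- first prove the growth statement for $g_q$, then invoke a general growth-versus-coefficients correspondence --- is precisely this two-step route, so your plan agrees with the paper's indicated approach.

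Where you go beyond the paper is in sketching \emph{how} Theorem~\ref{TordNN} itself is proved: you write out an explicit Shmel'kin/Schreier-type recursion $g_j=g_{j-1}+\sum_{m=1}^{s_j}\frac{1}{m}\big((z-1)e^{g_{j-1}}+1\big)^m$ and analyse its growth directly. That is indeed the method of~\cite{Pe97msb,Pe99JMSciSch_exp}, and your identification of the key asymptotic $g_j\sim\frac{z^{s_j}}{s_j}e^{s_j g_{j-1}}$ is right; from it the limit $\lim_{r\to\infty}\ln^{(q-1)}g_q(r)/r^{s_1}=s_2/s_1$ drops out by iterated logarithms. The one place to be careful in your write-up is the direct saddle-point paragraph: your heuristic location of the saddle and the conclusion are correct, but making this rigorous for functions of iterated-exponential growth is exactly what Theorem~\ref{TSher} packages --- so rather than redo that analysis, you should simply invoke it (and the classical order/type formula for $q=2$) once the growth of $g_q$ is established. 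With that substitution your plan is complete and matches the source.
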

Our scale is complete in the sense that the codimension growth of a nontrivial variety
of Lie algebras is bounded by some of these functions.

\begin{Th}[{\cite{Pe97msb}}]\label{Tcodimupper}
Let $L$ be a Lie algebra satisfying a nontrivial identity of degree $q>3$.
Then there exists an infinitesimal such that
$$ c_n(L)\le \frac{n!}{(\ln^{(q-3)}n)^n}(1+o(1))^n,\qquad n\to\infty. $$
\end{Th}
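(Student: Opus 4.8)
The plan is to pass to the relatively free algebra of $\mathrm{var}(L)$, convert the degree-$q$ identity into a polynilpotent-by-semisimple containment, and then transfer the asymptotics of Theorem~\ref{Tpoly} through complexity functions.

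Since $c_n(L)=c_n(\V)$ for $\V=\mathrm{var}(L)$, we may work inside $\V$, or with its countably generated relatively free algebra $F$. Linearizing, $\V$ satisfies a nontrivial multilinear identity of degree $q$; writing it in the left-normed basis $\{[x_1,x_{\sigma(2)},\dots,x_{\sigma(q)}]\}$ of $P_q$ and solving for a lexicographically extremal monomial produces a rewriting rule. Applying this rule inside arbitrary longer left-normed products shows that $P_n/W_n$ is spanned by those left-normed monomials whose defining permutation avoids one fixed consecutive pattern of length $q-1$. This already reproves Grishkov's estimate $c_n\le n!/r^n$, but only that: a single avoided pattern costs merely a constant-power factor of $n!$.

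The crux is a sharper structural statement: a nontrivial identity of degree $q$ forces $\V$ to lie, up to a factor of merely exponential codimension growth coming from its finite-dimensional semisimple quotients (whose dimension is bounded in terms of $q$), inside a polynilpotent variety with $q-1$ solvable layers whose two innermost layers are abelian, for instance $\NN_c\AA^{q-2}$ with $c=c(q)$. The natural mechanism is induction on $q$: the degree-$q$ identity yields an identity of strictly smaller degree satisfied on a suitable derived-type ideal of $F$, the inductive hypothesis applies there, and one reassembles via multiplication of varieties. Essentially all the difficulty lives here, because one must exploit the full $S_n$-orbit of consequences of the identity rather than a single pattern, and must split off the genuinely non-solvable (e.g.\ simple modular) part — invisible to pattern-avoidance — and absorb it into the exponential slack.

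To finish, apply Theorem~\ref{Tpoly} to $\NN_c\AA^{q-2}$: having $q-1$ layers with the innermost two of class $1$, its codimension growth is
\[
  c_n(\NN_c\AA^{q-2})=\frac{n!}{(\ln^{(q-3)}n)^{n}}(1+o(1))^{n},\qquad n\to\infty,
\]
the iterated logarithm $\ln^{(q-3)}$ appearing because there are $q-1$ solvable layers while the initial abelian layer contributes nothing to its exponent; equivalently, the associated complexity function $\CO(\cdot,z)$ is an entire function of the $(q-2)$-fold iterated-exponential growth type studied in Theorem~\ref{TordNN}, and a Cauchy/saddle-point estimate on its Taylor coefficients gives the same bound. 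Since $\V$ sits inside this variety up to an exponential factor $C^{n}$ that is swallowed by $(1+o(1))^{n}$, we obtain $c_n(L)=c_n(\V)\le n!\,(\ln^{(q-3)}n)^{-n}(1+o(1))^{n}$. The main obstacle is the structural step; I expect the induction on the identity degree to be the right framework, the delicate points being to check that passing to the derived ideal really lowers the degree while keeping the layer count, and to handle the bounded-dimensional non-solvable part without overshooting the exponential slack.
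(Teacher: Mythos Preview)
Your proposal has a genuine gap at exactly the point you flag as ``where essentially all the difficulty lives.'' The structural claim --- that a degree-$q$ identity forces $\V$ to embed, up to an exponential factor, inside a polynilpotent variety such as $\NN_c\AA^{q-2}$ --- is not a known theorem, and the mechanism you sketch does not produce it. Passing to a derived ideal of the relatively free algebra does not in general lower the degree of the governing identity in any controlled way, and the ``split off the bounded-dimensional semisimple part'' step has no rigorous content: over arbitrary fields there is no Levi decomposition, and even in characteristic zero the relatively free algebra of a PI variety has no such canonical splitting. Varieties like $\mathrm{var}(\mathfrak{sl}_2)$ show that non-solvable pieces genuinely occur, and nothing in your outline explains why their contribution stays exponential rather than, say, $(n!)^{1-\varepsilon}$.

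The paper's route (here and in \cite{Pe97msb}) is purely combinatorial and avoids structure theory entirely. Your first paragraph was actually pointed in the right direction and you abandoned it too soon. The identity does give a rewriting rule on regular words, but the correct invariant is not a single avoided consecutive pattern: it is $m$-\emph{Lie indecomposability} --- a regular word $w$ survives iff it cannot be written as $w_m\cdots w_1\cdot b$ with the $w_i$ regular and decreasing. One shows (proof of Theorem~\ref{TboundL}) that $P_n(\V)$ is spanned by $\{[w]:w\in R_m\}$, and the recursive description $R_m\leftrightarrow Q_{m-1}$ of Lemma~\ref{Lw2} yields the coefficientwise bound $\CO(\V,z)\preceq \tilde r_m(z)$, an $(m-2)$-fold iterated exponential. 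Feeding this into the Hadamard-type formula of Theorem~\ref{TSher} gives $c_n(L)\le n!\,(\ln^{(q-3)}n)^{-n}(1+o(1))^n$ directly. No polynilpotent containment is needed; the iterated-exponential shape of the complexity function comes from the word combinatorics, not from layers of a solvable series.
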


The codimension growth was studied also for Jordan algebras~\cite{Drensky87,GiaZe11},
absolutely free (commutative and anticommutative) algebras~\cite{Pe05}, and arbitrary linear algebras~\cite{GiaMiZai08}.

\subsection{Poisson algebras}

A $K$-vector space $A$ is called a {\it Poisson algebra}
if $A$ is supplied with two $K$-bilinear operations:
\begin{itemize}
\item
$A$ is a commutative associative algebra with unit and multiplication $a\cdot b$ (or $ab$), where $a, b\in A$;
\item
$A$ is a Lie algebra which product is traditionally denoted by the {\it Poisson bracket} $\{a, b\}$, where $a, b\in A$;
\item these two operations are related by the Leibnitz rule:
\begin{equation*}
\{a\cdot b, c\}=a\cdot\{b, c\}+b\cdot\{a, c\},\qquad  a, b, c \in A.
\end{equation*}
\end{itemize}


Let $L$ be a Lie algebra over an arbitrary field $K$, and
$\{U_n| n\ge 0\}$ the natural filtration of its universal enveloping algebra $U(L)$.
Consider the {\it symmetric algebra}
$S(L):=\gr U(L)=\mathop{\oplus}\limits_{n=0}^\infty U_{n}/U_{n-1}$.
Recall that $S(L)$ is identified with the polynomial ring $K[v_i\,|\, i\in I]$,
where $\{v_i\,|\, i\in I\}$ is an arbitrary $K$-basis of $L$ (see e.g.~\cite{Ba,Dixmier,BMPZ}).
Set $\{v_i,v_j\}=[v_i,v_j]$ for all $i,j\in I$.
Extending to the whole of $S(L)$ by linearity and using the Leibnitz rule, one obtains the Poisson bracket:
$$\{v_i\cdot v_j,v_k\}=v_i\cdot\{v_j,v_k\}+v_j\cdot\{v_i,v_k\},\qquad i,j,k\in I.$$
Now, $S(L)$ has a structure of a Poisson algebra, called the {\it symmetric algebra} of $L$.

\subsection{Free Poisson algebras}
Consider the free Lie algebra $L=L(X)$ generated by a set $X$ and its symmetric algebra $F(X)=S(L(X))$.
Then, $F(X)$ is a {\it free Poisson algebra} in $X$, as was shown by I.Shestakov~\cite{Shestakov93}.
For example, let $L=L(x,y)$ be the free Lie algebra of rank 2. Consider its Hall basis~\cite{Ba}
$$L=\langle x, y, [y,x], [[y,x],x], [[y,x],y], [[[y,x],x],x],\ldots\rangle_K.$$
We obtain the free Poisson algebra $F(x,y)=S(L)$ of rank 2, it has the following canonical basis:
\begin{equation}\label{basisP}
F(x,y)=\big\langle x^{n_1} y^{n_2} \{y,x\}^{n_3} \{\!\{y,x\},x\}^{n_4} \{\!\{y,x\},y\}^{n_5}  \{\!\{\! \{y,x\},x\},x\}^{n_6}\cdots\,\big|\, n_i\ge 0
\big\rangle_K,
\end{equation}
the number of non-zero powers $n_i$  above being finite.

\subsection{Poisson identities, customary identities}
A definition of  a {\it Poisson PI-algebra} is standard,
identities being elements of the free Poisson algebra  $F(X)$ of countable rank.
Assume that basic facts on identical relations of linear algebras are known  (see, e.g.,~\cite{Ba,Drensky,GiaZai}).

A basic theory of identical relations for Poisson algebras  was developed by D.Farkas~\cite{Farkas98,Farkas99}.
He suggested to use {\it customary identities}~\cite{Farkas98}:
\begin{equation}\label{customary}
\sum_{\substack{\sigma\in S_{2n}\\ \sigma(2k-1)<\sigma(2k),\ k=1,\dots,n\\\sigma(1)<\sigma(3)<\cdots<\sigma(2n-1)}}
\!\!\!\!\!\!\!\!\!\!\!\!\!
\mu_{\sigma} \{x_{\sigma (1)},x_{\sigma (2)}\}\cdots
\{x_{\sigma (2n-1)},x_{\sigma (2n)}\}\equiv 0,\quad \mu_\sigma\in K; \ \mu_e=1.
\end{equation}
He proved that a nontrivial multilinear Poisson identity implies a customary identity.
The standard identity was defined as a particular case of customary identities~\cite{Farkas98, Farkas99}.
The related {\it customary codimension growth} was introduced and applied  in characteristic zero
by S.Mishchenko, V.Petrogradsky, and A.Regev~\cite{MiPeRe}.

Algebras and varieties of Poisson algebras of slow ordinary codimension growth,
such as polynomial, (almost) exponential, minimal with respect to some condition,
were in details studied by S.Ratseev~\cite{Rats13,Rats14,Rats16,RatsCherv16}.

Identical relations of symmetric Poisson algebras
of Lie (super)algebras initially were studied by Kostant~\cite{Kos81}, I.Shestakov~\cite{Shestakov93}, and D.Farkas~\cite{Farkas99}.
A.Giambruno and V.Petrogradsky found necessary and sufficient conditions
for existence of non-trivial multilinear Poisson identical relations in truncated symmetric algebras
of (restricted) Lie algebras~\cite{GiPe06}.
So called pure Lie identical relations in (truncated) symmetric Poisson algebras,
such as solvability and Lie nilpotence were studied further in~\cite{PeIlana17,Sic20,SicUse21Bull}.

\section{Main results: Scale for codimension growth of Poisson algebras}

We distinguish {\bf two types} of elements of the free Poisson algebra and identities they yield:
\begin{itemize}
\item
The elements of the free Lie algebra $L(X)$, which are naturally contained in $F(X)=S(L(X))$,
we call {\it (pure) Lie identities}, e.g. such as
the identity of Lie nilpotence of class $s$:\\
$\{\{X_1,X_2\},\ldots X_{s+1}\}\equiv 0$.
\item The remaining elements are called {\it mixed identities}. For example, consider:
\begin{equation}\label{x1x2}
\{X_1,X_2\}\cdot \{X_3,X_4\}\equiv 0.
\end{equation}
\end{itemize}

Our goal is to show that the scale for the codimension growth for Lie algebras~\eqref{scale}
also stratifies the ordinary codimension growth of Poisson PI-algebras of the first type, e.g. {\bf satisfying a non-trivial Lie identity}.

Suppose that $\VV$ is a variety of Lie algebras. Then by $\PP\VV$ we denote the variety of Poisson algebras
defined by the Lie identical relations of $\VV$.
So, we obtain the variety of solvable Poisson algebras $\PP\AA^q$ and the
variety of Lie polynilpotent Poisson algebras $\PP\NN_{s_q}\cdots\NN_{s_1}$.

Let $P_n=P_n(X_1,\ldots,X_n)$ be the span of all multilinear monomials in $X_1,\ldots,X_n$
in the free Poisson algebra $F(X)=S(L(X))$ generated by $X=\{X_i|i\in \N\}$,
i.e. we consider multilinear products of Lie commutators, for example, using the basis of the free Poisson algebra~\eqref{basisP}.
Let $F(\VV,X)$ be the relatively free algebra of a variety of Poisson algebras $\VV$ and
$\phi:F(X)\to F(\VV,X)$ the natural epimorphism.
We define the {\it ordinary codimension sequence} $c_n(\VV):=\dim \phi(P_n)$, $n\ge 0$.
We warn that in case of Poisson algebras, the {\it customary codimension growth} is also studied,
defined as the dimension of the space of multilinear customary polynomials~\eqref{customary}
in a relatively free algebra, see~\cite{Farkas98,Farkas99,MiPeRe}.
In this paper we study the ordinary codimension growth only, the term "ordinary" can be omitted.

We formulate the first main result of the paper.
\begin{Th}\label{TupperP}
Let $\WW$ be a variety of Poisson algebras satisfying a nontrivial {\bf Lie identity} of degree $m\ge 3$ and
$\CO(\WW,z)=\sum\limits_{n=0}^\infty \frac {c_n(\WW)}{n!} z^n$ its complexity function, the field being arbitrary. Then
\begin{enumerate}
\item
The following coefficientwise bound   holds:
$$
\CO(\WW,z)\preceq \underbrace{\exp(z\exp(z \exp(\cdots(z\exp(z\exp}\limits_{m-1 \text{\rm{ times }} \exp } z)) \cdots))).
$$
\item There exists an infinitesimal such that
$$ c_n(\WW) \le \frac{n!} {(\ln^{(m-2)}n)^n}(1+o(1))^n, \qquad n\to\infty. $$
\end{enumerate}
\end{Th}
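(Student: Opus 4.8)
The plan is to reduce the Poisson problem to the already-understood Lie case by exploiting the basis \eqref{basisP} of the free Poisson algebra. A multilinear element of $F(X)$ is a linear combination of products $w_1\cdots w_k$ of Lie commutators $w_j$ in pairwise disjoint subsets of the variables $X_1,\dots,X_n$, where each $w_j$ is a multilinear left-normed bracket (a Lie monomial). Thus $P_n$ decomposes, as a vector space, according to the unordered set partition $\lambda=\{\pi_1,\dots,\pi_k\}$ of $\{1,\dots,n\}$ telling which variables sit inside which commutator factor: $P_n\cong\bigoplus_{\lambda}\bigotimes_{j=1}^k P^{\mathrm{Lie}}_{|\pi_j|}$, where $P^{\mathrm{Lie}}_m$ is the multilinear Lie space. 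Since the defining identities of $\WW$ are \emph{Lie} identities — they involve only the bracket — the relations that $\phi$ kills act \emph{factorwise}: a product $w_1\cdots w_k$ is an identity of $\WW$ as soon as one $w_j$ is a Lie identity of the underlying Lie variety $\VV$. Hence $\phi(P_n)$ is spanned by products of Lie monomials taken modulo Lie identities in each factor, and we get the coefficientwise estimate
\[
c_n(\WW)\le\sum_{\lambda\vdash\{1,\dots,n\}}\ \prod_{j=1}^{k(\lambda)} c_{|\pi_j|}(\VV),
\]
where $c_m(\VV)$ is the Lie codimension sequence. (One must be a little careful, since distinct basis commutators of the same length — e.g. $\{\{y,x\},x\}$ versus $\{\{y,x\},y\}$ — contribute to the same block; but they are absorbed by working with the full multilinear Lie space $P^{\mathrm{Lie}}_m$ rather than with a chosen Hall basis, so the bound above is exactly the right one and is in fact an equality when $\WW=\PP\VV$.)

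The second step is the exponential–generating–function translation, which is why complexity functions were introduced. The set–partition sum above is precisely the \emph{exponential formula}: if $f(z)=\sum_{m\ge1}\frac{c_m(\VV)}{m!}z^m=\CO(\VV,z)$ is the complexity function of the Lie variety $\VV$, then
\[
\sum_{n\ge0}\frac{1}{n!}\Bigl(\sum_{\lambda\vdash\{1,\dots,n\}}\prod_{j}c_{|\pi_j|}(\VV)\Bigr)z^n=\exp\bigl(\CO(\VV,z)\bigr).
\]
Therefore $\CO(\WW,z)\preceq\exp(\CO(\VV,z))$ coefficientwise, where $\VV$ is any Lie variety containing the Lie identity of degree $m$ satisfied by $\WW$ — in particular we may take $\VV=\AA^{m-1}$, the solvable variety of length $m-1$, since a nontrivial Lie identity of degree $m$ implies solvability of length $m-1$ (the standard Lie-PI fact behind Grishkov's and Razmyslov's theorems). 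Now iterate: Theorem~\ref{Tcodimupper}, or rather its generating-function form, bounds $\CO(\AA^{m-1},z)$ in terms of an $(m-2)$-fold iterated exponential; more directly, one shows by induction on $q$ that $\CO(\PP\AA^{q},z)\preceq\exp^{\circ q}(z)$ by combining $\CO(\WW,z)\preceq\exp(\CO(\VV,z))$ with the description of $\AA^{q}$ as $\AA\cdot\AA^{q-1}$ and the analogous exponential-formula step for the outer solvable layer. Chasing the indices (the Lie identity of degree $m$ sits in $\AA^{m-1}$, whose complexity function is governed by an $(m-1)$-fold tower once the outermost $\exp$ coming from the Poisson product is included) yields exactly
\[
\CO(\WW,z)\preceq\underbrace{\exp(z\exp(z\exp(\cdots(z\exp(z\exp}\limits_{m-1\text{ times }\exp}z))\cdots))),
\]
which is part~(1).

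For part~(2) one reads off the asymptotics of $c_n(\WW)$ from the growth of the entire majorant in part~(1) by a Tauberian/saddle-point argument for fast-growing entire functions — this is the technique the abstract advertises, and it is standard in the author's earlier work: if $g(z)=\sum a_n z^n/n!$ is entire with $g(z)\preceq h(z)$ and $h$ is one of these iterated-exponential towers, then $a_n\le n!\,/(\ln^{(m-2)}n)^n\,(1+o(1))^n$, the correction factor being precisely the $(m-2)$-nd iterated logarithm because differentiating/inverting an $(m-1)$-fold exponential tower introduces exactly $m-2$ nested logarithms. Concretely, $h(r)=\sum a_n r^n/n!$ and Cauchy's estimate $a_n\le n!\,h(r)/r^n$ optimized over $r$ gives, with $r$ chosen so that $\ln^{(m-1)}h(r)\asymp n$, the stated bound $c_n(\WW)\le n!/(\ln^{(m-2)}n)^n(1+o(1))^n$. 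The main obstacle, and the place where care is genuinely needed, is the bookkeeping in part~(1): making the reduction ``$\WW$ satisfies a Lie identity of degree $m$ $\Rightarrow$ $\WW\subseteq\PP\AA^{m-1}$ $\Rightarrow$ the exponential tower has exactly $m-1$ layers'' precise, and in particular checking that the extra outermost $\exp$ contributed by the commutative Poisson product is correctly counted against the index shift from ``degree $m$'' to ``solvable length $m-1$'' so that the final tower height is $m-1$ and not $m$ or $m-2$; once the generating-function inequality $\CO(\WW,z)\preceq\exp(\CO(\VV,z))$ and Theorem~\ref{Tcodimupper}'s complexity version are in hand, the asymptotic extraction in part~(2) is routine by the entire-function growth machinery.
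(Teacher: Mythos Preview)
Your first reduction is correct and matches the paper: the epimorphism $S(L(\VV,X))\twoheadrightarrow F(\WW,X)$ (equivalently, your exponential-formula argument on set partitions) gives the coefficientwise bound $\CO(\WW,z)\preceq\exp(\CO(\VV,z))$, where $\VV$ is the Lie variety defined by the given Lie identity of degree~$m$. This is exactly what the paper does via Theorem~\ref{Texp}.

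The gap is in your second step. The assertion ``a nontrivial Lie identity of degree $m$ implies solvability of length $m-1$'' is false, and it is \emph{not} the fact behind Grishkov's or Razmyslov's theorems. Any finite-dimensional simple Lie algebra (say $\mathfrak{sl}_2$) satisfies nontrivial Lie identities yet is not solvable of any length. So you cannot embed $\VV$ in $\AA^{m-1}$, and the inductive argument via $\AA^q=\AA\cdot\AA^{q-1}$ never gets off the ground. Citing Theorem~\ref{Tcodimupper} does not rescue this either: that theorem is an \emph{asymptotic} statement about $c_n(\VV)$, not a coefficientwise majorant for $\CO(\VV,z)$, and part~(1) demands the latter.

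What the paper actually does for the second step is prove a new, direct coefficientwise bound for arbitrary Lie PI-varieties (Theorem~\ref{TboundL}):
\[
\CO(\VV,z)\preceq \tilde r_m(z)=z\exp(z\exp(\cdots(z\exp z)\cdots))\quad (m-2\text{ exponentials}),
\]
proved combinatorially via Razmyslov's notion of $m$-\emph{Lie indecomposable} regular words (Lemmas~\ref{Lw1}--\ref{LtildeQR}). One shows that the multilinear component $P_n(\VV)$ is spanned by acceptable bracketings of regular $m$-Lie-indecomposable words, then bounds their exponential generating function by the recursion $\tilde r_m(z)=z\exp(\tilde r_{m-1}(z))$. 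Composing with your (correct) first step gives $\CO(\WW,z)\preceq\exp(\tilde r_m(z))=\tilde q_m(z)$, which is exactly the $(m-1)$-fold tower in part~(1). Part~(2) then follows from the Hadamard-type formula for entire functions of infinite order (Theorem~\ref{TSher}), much as you sketch. The missing ingredient in your proposal is precisely this word-combinatorial bound replacing the incorrect solvability reduction.
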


The second main result specifies the place of solvable Poisson algebras  $\PP\AA^q$ on our scale.
We prove our result in generality of free Lie polynilpotent Poisson algebras as follows.

\begin{Th}\label{TPpoly}
Consider a tuple of integers $(s_q,\ldots,s_1)$, where $q\ge 3$, and the variety of Lie polynilpotent Poisson algebras
$\PP\NN_{s_q}\cdots\NN_{s_1}$, a field $K$ being arbitrary.
Then the ordinary codimension growth has the same parameters as the respective variety of polynilpotent Lie algebras
$\NN_{s_q}\cdots\NN_{s_1}$ (see Theorem~\ref{Tpoly}):
$$ c_n(\PP\NN_{s_q}\cdots\NN_{s_1})=
  \displaystyle
  \frac{n!}{(\ln^{(q-2)}n)^{n/s_1}}
                  \left(\frac{s_2+o(1)}{s_1}
                  \right)^{n/s_1},\qquad  n\to\infty.
$$
\end{Th}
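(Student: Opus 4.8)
\emph{Overall plan.} I would trap $c_n(\PP\NN_{s_q}\cdots\NN_{s_1})$ between $c_n(\NN_{s_q}\cdots\NN_{s_1})$ and a partition sum built out of it, and then show — this is where $q\ge 3$ is used — that the two bounds are asymptotically equal, so that Theorem~\ref{Tpoly} forces the stated value. Write $\VV=\NN_{s_q}\cdots\NN_{s_1}$, let $R$ be the relatively free Lie algebra of $\VV$ on the countable set $X$, and recall $c_k(\VV)=\dim R_k$, the dimension of the multilinear component of $R$ in $k$ variables. For the lower bound, turn $R$ into a Poisson algebra $A:=K1\oplus R$ by adjoining an external unit and setting $R\cdot R=0$; the Leibnitz rule is immediate, and since in $A$ every Poisson bracket takes values in $R$ while $1$ is bracket–central, a multilinear Lie identity holds in $A$ exactly when it holds in $R$, whence $A\in\PP\VV$. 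In $A$ every product of two Lie commutators of the $X_i$ vanishes, so $\phi(P_n)$ is just the image of the span of multilinear Lie commutators and $c_n(A)=c_n(\VV)$; thus $c_n(\PP\VV)\ge c_n(\VV)$ and Theorem~\ref{Tpoly} gives the lower estimate of Theorem~\ref{TPpoly}. For the upper bound, in $F(\PP\VV,X)$ the Lie subalgebra generated by $X$ obeys the Lie identities of $\VV$, hence is a quotient of $R$; as $F(\PP\VV,X)$ is commutative and generated by it, $F(\PP\VV,X)$ is a Poisson homomorphic image of the symmetric algebra $S(R)$. (Note $S(R)$ itself need not lie in $\PP\VV$ — e.g. $S$ of a free nilpotent Lie algebra of class $2$ is not Lie nilpotent of class $2$ — so this gives only an inequality.) Hence $c_n(\PP\VV)\le\dim\psi(P_n)$ for $\psi\colon F(X)=S(L(X))\to S(R)$ the natural map, and reading off a basis of $\psi(P_n)$ in the polynomial ring $S(R)$ as squarefree products of basis elements of $R$ indexed by the blocks of a set partition of $\{1,\dots,n\}$ yields
$$c_n(\PP\VV)\ \le\ \sum_{\pi\vdash\{1,\dots,n\}}\ \prod_{B\in\pi}c_{|B|}(\VV),$$
equivalently, coefficientwise, $\CO(\VV,z)\preceq\CO(\PP\VV,z)\preceq\exp\bigl(\CO(\VV,z)\bigr)$.

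\emph{The key estimate.} It remains to prove that for $q\ge 3$
$$\sum_{\pi\vdash\{1,\dots,n\}}\ \prod_{B\in\pi}c_{|B|}(\VV)\ =\ c_n(\VV)\,e^{o(n)},$$
i.e. that passing from $\CO(\VV,z)$ to $\exp\bigl(\CO(\VV,z)\bigr)$ does not change the asymptotics of the normalised coefficients beyond a subexponential factor; together with the lower bound this squeezes $c_n(\PP\VV)$ onto the value of Theorem~\ref{Tpoly}. Grouping the sum by the shape $\lambda=(\lambda_1\ge\cdots\ge\lambda_t)\vdash n$, with part multiplicities $m_j$, the multinomial coefficient cancels the factorials, and inserting $c_k(\VV)=\frac{k!}{(\ln^{(q-2)}k)^{k/s_1}}\bigl(\frac{s_2+o(1)}{s_1}\bigr)^{k/s_1}$ from Theorem~\ref{Tpoly} reduces matters to bounding
$$\sum_{\lambda\vdash n}\ \frac{1}{\prod_j m_j!}\cdot\frac{(\ln^{(q-2)}n)^{n/s_1}}{\prod_i(\ln^{(q-2)}\lambda_i)^{\lambda_i/s_1}}\,(1+o(1))^n .$$
The mechanism is that for $q\ge 3$ the iterated logarithm $\ln^{(q-1)}$ varies so slowly that $\sum_i\lambda_i\bigl(\ln^{(q-1)}n-\ln^{(q-1)}\lambda_i\bigr)=o(n)$ for every shape that does not have too many small parts (for a two-part shape $\alpha n,(1-\alpha)n$ this is $O(n/\ln^{(q-2)}n)$), while shapes with many small parts are annihilated by $1/\prod_j m_j!$; since the number $p(n)=e^{o(n)}$ of shapes is harmless, the whole sum is $e^{o(n)}$. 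Alternatively — and this is the line of the generating–function techniques of the paper — one studies the entire function $\exp\bigl(\CO(\VV,z)\bigr)$ directly: the growth of $\CO(\NN_{s_q}\cdots\NN_{s_1},z)$ (as studied in the author's earlier work, cf. Theorem~\ref{TordNN}) is fast enough for $q\ge 3$ that a maximal-term / saddle-point analysis shows composition with $\exp$ alters neither its order nor its finer growth invariants, and the dictionary between the growth of a fast growing entire function and the asymptotics of its Taylor coefficients then returns the stated formula.

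\emph{Main obstacle.} The hard part is this last estimate, and it must be carried out carefully because the phenomenon is genuinely restricted to $q\ge 3$: for $q=2$ one has $c_k(\VV)=(k!)^{1-1/s_1}(s_2+o(1))^{k/s_1}$, the multinomial coefficients are no longer absorbed, and already the two-block partitions contribute a factor of order $2^{n/s_1}$, so there $\exp$ strictly increases the growth. To recover not merely the position of $c_n(\PP\VV)$ on the scale~\eqref{scale} (the parameters $q$ and $\a=s_1$) but also the exact "type" $\bigl(\frac{s_2+o(1)}{s_1}\bigr)^{n/s_1}$, the bookkeeping with the nested logarithms $\ln^{(q-2)}$ and $\ln^{(q-1)}$ has to be done uniformly in the shape $\lambda$ (or, on the analytic side, the saddle-point asymptotics of $\exp(\CO(\VV,z))$ must be controlled to the same precision), and this is precisely where the techniques for fast growing entire functions are indispensable.
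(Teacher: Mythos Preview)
Your lower bound is fine, but the upper bound is too weak and the claimed ``key estimate'' is false. The inequality $\CO(\PP\VV,z)\preceq\exp\bigl(\CO(\VV,z)\bigr)$ is correct, yet composition with $\exp$ raises the growth by one full level: if $f(z)=\CO(\VV,z)$ satisfies $\ln^{(q-1)}{\rm M}_f(r)/r^{s_1}\to s_2/s_1$ (Theorem~\ref{TordNN}), then $\ln^{(q)}{\rm M}_{\exp f}(r)/r^{s_1}\to s_2/s_1$, and Theorem~\ref{TSher} applied with its parameter shifted by one gives Taylor coefficients of $\exp f$ of order $\bigl(\ln^{(q-1)}n\bigr)^{-n/s_1}$, not $\bigl(\ln^{(q-2)}n\bigr)^{-n/s_1}$. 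The discrepancy $\bigl(\ln^{(q-2)}n/\ln^{(q-1)}n\bigr)^{n/s_1}$ is far from $e^{o(n)}$. Concretely, for $q=3$ take the shape with $k=\lfloor\sqrt n\rfloor$ equal parts of size $m\approx\sqrt n$: then $\ln^{(q-1)}n-\ln^{(q-1)}m=\ln\ln n-\ln(\tfrac12\ln n)=\ln 2$, so $\sum_i\lambda_i\bigl(\ln^{(q-1)}n-\ln^{(q-1)}\lambda_i\bigr)=n\ln 2$, which is $\Theta(n)$, while $1/k!=e^{-O(\sqrt n\,\ln n)}$ does not compensate. Your two-block calculation is correct, but shapes with parts of intermediate polynomial size --- not merely ``many small parts'' --- already destroy the estimate.

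The paper closes this gap with a structural input you do not have: the Siciliano--Usefi theorem (Theorem~\ref{TSicUse}). Since $P=F(\PP\VV,X)$ is Lie solvable, the Poisson ideal generated by $\{\{\{x_1,x_2\},\{x_3,x_4\}\},x_5\}$ is associative nilpotent of some index $N$. Hence in any spanning product of basis elements of $L=F(\VV,X)$ at most $N$ factors may come from the centre-by-metabelian verbal ideal $H=\CC(L)$, while the remaining factors lie in $L/H\in\CC$. This gives
\[
\CO(P,z)\ \preceq\ \exp\bigl(\CO(\CC,z)\bigr)\cdot\bigl(\CO(L,z)\bigr)^{N},
\]
and since $\CO(\CC,z)$ has only level-one growth (Theorem~\ref{Tcenterby}: essentially $\tfrac{z^2}{2}e^z$), one gets $\ln\CO(P,r)\le\CO(\CC,r)+N\ln f(r)\le r^{2}\exp^{(q-2)}\bigl((s_2/s_1+o(1))r^{s_1}\bigr)$ --- one level \emph{below} $f(r)$ itself. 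Now $\ln^{(q-1)}\CO(P,r)/r^{s_1}\to s_2/s_1$ and Theorem~\ref{TSher} delivers the matching upper bound. The point you are missing is precisely this nilpotency: $S(R)$ is genuinely larger than $F(\PP\VV,X)$ by the full factor above, and no amount of saddle-point analysis of $\exp f$ will recover the loss.
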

Our approach is based on study of exponential generating functions of algebras and sets
and the technique of growth of fast growing entire functions developed by the author in~\cite{Pe97msb,Pe99JMSciSch_exp,Pe99Isr},
all necessary results are given in Section~\ref{Scomp}.

As an intermediate result, we obtain a new bound on the codimension growth of Lie algebras
in terms of complexity functions (Theorem~\ref{TboundL}).
To prove the second theorem we use a resent result of S.Siciliano and H.Usefi~\cite{SicUse21} (Theorem~\ref{TSicUse}),
due to which we turn to center-by-metabelian Lie algebras, which were extensively studied before.
Now we need to know the respective complexity function,
for which we find explicit formulas  in case of an arbitrary field, see Theorem~\ref{Tcenterby}.
Finally, we show that the ordinary codimension growth of Poisson algebras without
Lie identical relations can be very high (Theorem~\ref{TWs}).

\begin{Remark}
We expect the results will be applied to study overexponential codimension growth of varieties of Jordan algebras started in~\cite{Drensky87,GiaZe11}.
We conjecture that scale~\eqref{scale} (or similar to it) will appear in case of Jordan PI-algebras as well.
\end{Remark}

\section{Complexity functions and fast growing entire functions}
\label{Scomp}

In this section, we present facts on exponential generating functions of varieties of linear algebras,
called {\it complexity functions} and growth of related entire functions.

\subsection{Codimension growth of varieties of linear algebras and complexity functions}
Let $F(\VV,X)$ be the relatively free algebra of a variety of linear algebras $\VV$ generated by countable set $X=\{ x_i| i\in\N\}$.
Let $P_n \subset F(\VV,X)$ be the subspace of multilinear polynomials of degree $n$ in
variables $\{x_1,\dots,x_n\}$. One defines so called
{\em codimension growth sequence}: $c_n(\VV)=\dim P_n$, where $n\in\N$.
It was introduced by A.~Regev in order to study identical relations of associative algebras~\cite{Regev72}.
After Yu.P.Razmyslov~\cite{Razmyslov}, we consider the
{\em complexity function}.
This is the exponential generating function~\cite{GouJac} of the sequence $c_n$
$$\CO(\VV,z)=\sum_{n=0}^\infty \frac {c_n(\VV)}{n!}\, z^n,\quad z\in\C.$$

Let us give a more general setting in case of sets of monomials,
which are naturally extended to the case of vector spaces and algebras.
Let $A$ be a set of monomials in letters $X=\{x_i\,|\,i\in\mathbb N \}$.
Let $\widetilde{X}=\{x_{i_1},\dots,x_{i_n}\}\subset X$;
by  $P_n(A,\widetilde{X})$ denote the set
of all multilinear  elements of degree $n$ in $\widetilde{X}$ belonging to $A$.
Suppose that the number of elements $c_n(A,\widetilde{X})$
does not depend on the choice of $\widetilde{X}$, but depends only on $n$.
In this case we write $c_n(A)=c_n(A,\widetilde{X})$ and say that
$A$ is $X$-{\em uniform} and define the
{\it complexity function with respect to} $X$:
$${\mathcal C}_X(A,z)=\sum_{n=1}^\infty \frac{c_n(A)}{n!}z^n,\quad z\in \mathbb C.$$

{\it Examples}.
\begin{enumerate}
  \item
$\AA$ --- variety of all associative algebras
\begin{align}
P_n(\AA,\{x_1,\dots,x_n\})&=\langle x_{\pi(1)}\cdots x_{\pi(n)}\mid \pi\in S_n\rangle,
\quad c_n(\AA)=n!,\quad  n\ge 0,\nonumber\\
\CO(\AA,z)&=\sum_{n=0}^\infty \frac {n!}{n!}z^n=\frac 1{1-z}.
\label{commutative}
\end{align}
  \item
$\MM$ --- variety of commutative associative algebras ($XY-YX\equiv 0$)
\begin{align*}
P_n(\MM,\{x_1,\dots,x_n\})&=\langle x_1\cdots x_n\rangle,\quad  c_n(\MM)=1,\quad n\ge 0,\\
\CO(\MM,z)&=\sum_{n=0}^\infty \frac 1{n!}z^n=\exp(z).
\end{align*}
  \item
$\LL$ --- variety of all Lie algebras
\begin{align}
\nonumber
P_n(\LL,\{x_1,\dots,x_n\})&=\langle [x_n,x_{\pi(1)},\ldots,x_{\pi(n-1)}]\rangle,\quad c_n(\LL)=(n-1)!,\quad n\ge 1,\\
\CO(\LL,z)&=\sum_{n=1}^\infty \frac 1{n}z^n=-\ln(1-z).
\label{complL}
\end{align}
  \item
Let $\PP$ be the variety of all Poisson algebras over an arbitrary field.
By construction of the free Poisson algebra $F(X)\cong S(A(X))$, we get
\begin{equation}\label{cofrrP}
c_n(\PP)=n!,\quad  n\ge 0;\qquad
\CO(\PP,z)=\sum_{n=0}^\infty \frac {n!}{n!}z^n=\frac 1{1-z}.
\end{equation}
\end{enumerate}

The language of exponential generating functions is natural to study
codimension growth because of the following facts.
\begin{Lemma}[{\cite{GouJac,Pe97msb}}]\label{Lprod}
Consider sets of monomials $A$, $B$ in $X=\{x_i| i\in \N\}$,
let there exist complexity functions:
$$\CO(A,z)=\sum_{n=0}^\infty\frac{c_n(A)}{n!}z^n,\quad
  \CO(B,z)=\sum_{n=0}^\infty\frac{c_n(B)}{n!}z^n.$$
Then for the formal concatenation product of monomials $A\cdot B$
we have $\CO(A\cdot B,z)=\CO(A,z)\CO(B,z)$.
\end{Lemma}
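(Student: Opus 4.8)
The plan is to reduce the identity to the elementary fact that multiplication of exponential generating functions corresponds to the binomial convolution of coefficients, and to produce that convolution by a direct count of multilinear monomials.

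First I would fix a set $\widetilde{X}=\{x_{i_1},\dots,x_{i_n}\}\subset X$ with $|\widetilde{X}|=n$ and analyze $P_n(A\cdot B,\widetilde{X})$. By the definition of the formal concatenation product, an element here is a product $a\cdot b$ with $a\in A$ multilinear in some subset $\widetilde{Y}\subseteq\widetilde{X}$ and $b\in B$ multilinear in the complement $\widetilde{X}\setminus\widetilde{Y}$; and --- this is the one point worth stating explicitly --- in the \emph{formal} product a monomial records its own factorization, so the pair $(a,b)$, and in particular the left support $\widetilde{Y}$, is a genuine function of $a\cdot b$. Hence $P_n(A\cdot B,\widetilde{X})$ is the disjoint union, over subsets $\widetilde{Y}\subseteq\widetilde{X}$, of the sets $P_{|\widetilde{Y}|}(A,\widetilde{Y})\times P_{n-|\widetilde{Y}|}(B,\widetilde{X}\setminus\widetilde{Y})$.

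Next I would invoke the $X$-uniformity of $A$ and of $B$: the block indexed by $\widetilde{Y}$ has exactly $c_{|\widetilde{Y}|}(A)\,c_{n-|\widetilde{Y}|}(B)$ elements, independent of which subsets are chosen. Grouping subsets by their cardinality $k$ gives
$$c_n(A\cdot B,\widetilde{X})=\sum_{k=0}^{n}\binom{n}{k}\,c_k(A)\,c_{n-k}(B),$$
which depends only on $n$; this both shows that $A\cdot B$ is $X$-uniform --- so that $\CO(A\cdot B,z)$ is well defined, as the statement of the lemma presupposes --- and pins down every coefficient $c_n(A\cdot B)$. The final step is purely formal: dividing by $n!$, multiplying by $z^n$, summing over $n$, and reindexing the resulting double sum by the pair $(k,n-k)$ factors it as $\CO(A,z)\,\CO(B,z)$; no convergence is needed, since everything takes place in the ring of formal power series. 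This is the classical labeled-product rule for exponential generating functions, cf.~\cite{GouJac}.

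I do not anticipate a genuine obstacle. The only thing that must not be glossed over is the bookkeeping of the first step --- that in the formal concatenation $A\cdot B$ distinct factorizations are kept distinct, equivalently that the left support $\widetilde{Y}$ is a well-defined invariant of the monomial; once that is granted, the disjointness of the union, and hence the binomial convolution, is automatic, and the remainder is the standard exponential-generating-function computation.
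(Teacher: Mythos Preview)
Your argument is correct and is exactly the standard labeled-product rule for exponential generating functions: split the variable set into a left block and a right block, invoke $X$-uniformity, and recognize the binomial convolution. The paper itself does not supply a proof of this lemma --- it is stated with citations to \cite{GouJac,Pe97msb} and used as a black box --- so there is nothing further to compare; what you wrote is precisely the argument one finds in those references.
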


\begin{Th}[{\cite{GouJac,Razmyslov,Pe97msb,Pe11}}]\label{Texp}
Let a Lie algebra $L$ be uniform with respect to the generating set $X=\{x_i| i\in \N\}$.
Then the complexity function of its universal enveloping algebra satisfies:
$$\CO(U(L),z)=\exp \CO(L,z). $$
\end{Th}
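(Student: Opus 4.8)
The plan is to combine the Poincar\'e--Birkhoff--Witt theorem with Lemma~\ref{Lprod} (multiplicativity of complexity functions under formal concatenation); the underlying combinatorial content is the classical exponential formula for exponential generating functions. First note that, $L$ being a Lie algebra generated by $X$, it has no nonzero elements of degree $0$, so $c_0(L)=0$ and $\CO(L,z)$ has zero constant term; hence $\exp\CO(L,z)$ is a well-defined formal power series and the asserted identity is an identity of formal power series, which makes the whole argument purely combinatorial.

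Next I would set up the grading. As $L$ is generated by $X$ — and, as in all the situations of interest, multigraded by the degree in $X$ — it is a quotient of the free Lie algebra $\mathcal L(X)$ by a multigraded ideal, so $L=\bigoplus_{\mathbf d}L_{\mathbf d}$ over multidegrees $\mathbf d\in\N^{(X)}$, and $U(L)$ inherits this $\N^{(X)}$-grading. In particular $P_n(U(L),\{x_1,\dots,x_n\})$ is precisely the homogeneous component of $U(L)$ of multidegree $(1,\dots,1)$ on $\{x_1,\dots,x_n\}$. For a finite subset $S\subset X$ write $L_S$ for the component of multidegree $1$ on each $x_i\in S$ and $0$ elsewhere; then $L_S=P_{|S|}(L,S)$, so uniformity of $L$ yields $\dim L_S=c_{|S|}(L)$, independent of $S$. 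I would then choose an ordered $K$-basis $\{u_j\mid j\in J\}$ of $L$ adapted to the multigrading, so that it restricts to a basis of each $L_{\mathbf d}$, hence of each $L_S$.

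Now I would invoke PBW (valid over an arbitrary field): the ordered monomials $u_{j_1}\cdots u_{j_k}$ with $j_1\le\cdots\le j_k$ form a $K$-basis of $U(L)$, each multihomogeneous of multidegree the sum of those of its factors. Let $U(L)^{(k)}$ be the span of those with exactly $k$ factors, so $U(L)=\bigoplus_{k\ge0}U(L)^{(k)}$ as multigraded vector spaces. Picking out multidegree $(1,\dots,1)$ on $\{x_1,\dots,x_n\}$, the factors must have pairwise disjoint, nonempty supports with union $\{x_1,\dots,x_n\}$, each factor being multilinear on its support; since the supports are then distinct, the $j_i$ are distinct and the increasing PBW order is uniquely determined by the unordered set of factors. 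Hence the multilinear component of $U(L)^{(k)}$ has dimension $\frac{1}{k!}$ that of the $k$-fold formal concatenation $L\cdot L\cdots L$ (the $S_k$-action permuting the $k$ factors being free on the relevant basis, as the factors have distinct supports), so by Lemma~\ref{Lprod} its complexity function is $\frac{1}{k!}\CO(L,z)^{k}$. Summing over $k$,
$$
\CO(U(L),z)=\sum_{k\ge0}\CO\big(U(L)^{(k)},z\big)=\sum_{k\ge0}\frac{1}{k!}\,\CO(L,z)^{k}=\exp\CO(L,z).
$$

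The step needing the most care is the middle one: checking that a multigraded PBW basis restricting to a basis of every $L_S$ can indeed be chosen, and that the resulting dictionary between multilinear PBW monomials and set partitions of $\{1,\dots,n\}$ labelled by basis vectors of the $L_{S_i}$ is a genuine bijection onto a basis of $P_n(U(L))$ — in particular that the constraint $j_1\le\cdots\le j_k$ evaporates because distinct factors necessarily have distinct (disjoint) supports. Once this is established, the passage to generating functions is routine; it is exactly the assertion that the exponential of the exponential generating function enumerating the blocks enumerates the labelled set partitions.
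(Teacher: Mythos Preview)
The paper does not give its own proof of this theorem; it is quoted with references and used as a black box. Your argument is correct and is precisely the standard one behind those references: PBW plus the exponential formula for exponential generating functions. The same mechanism surfaces explicitly elsewhere in the paper --- in the proof of claim~3 of Lemma~\ref{Lrq} the author simply says ``the ordered products \ldots\ are the same as in PBW-theorem'' and invokes Theorem~\ref{Texp}, and in the proof of Theorem~\ref{TWs} (see~\eqref{coUk}) the passage from unordered to ordered multilinear products via division by $k!$ is exactly your $\CO(U(L)^{(k)},z)=\tfrac{1}{k!}\CO(L,z)^k$ step.

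One remark on the point you flagged yourself: your hedge ``as in all the situations of interest, multigraded by the degree in $X$'' is appropriate. The paper's notion of uniformity is defined for \emph{sets of monomials} and then ``naturally extended'' to algebras without further comment; to speak of $P_n(L,\widetilde X)$ and of a PBW basis compatible with multidegree one genuinely needs the $\N^{(X)}$-grading you impose. Once that is granted, your key observation --- that in a multilinear PBW monomial the factors have pairwise disjoint nonempty supports, hence are distinct, hence the weakly increasing constraint becomes a strict one uniquely fixed by the unordered set of factors --- is exactly what makes the bijection with block-labelled set partitions go through, and the rest is routine.
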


\subsection{Growth of fast growing entire functions, applications}
We start with observations on growth of fast growing entire functions. Fix notations
\begin{align*}
&\ln^{(1)}x:=\ln x,\quad\quad\  \ln^{(q+1)}x :=\ln(\ln^{(q)}x);\\
&\exp^{(1)}x:=\exp x,\quad \exp^{(q+1)}x:=\exp(\exp^{(q)}x),\quad q\in \N.
\end{align*}

Let  $f(z)$   be an entire function of complex variable.
We use standard notation ${\rm M}_f(r):=\max\limits_{|z|=r}|f(z)|$ for $r\in\R^+$.
Fix an integer $q\ge 1$.
We define the {\em order} and  {\em type} of a {\em level}
$q$ (the latter we define provided that $0<\rho=\ord_q f<\infty$) introduced by the author in~\cite{Pe97msb}:
\begin{align*}
\ord_q f &: = \inf\{\rho \mid {\rm M}_f(r)\lla \exp^{(q)}(r^\rho) \}
    =\limsup_{r\to+\infty} \frac{\ln^{(q+1)} {\rm M}_f(r)} {\ln r};\\
\typ_q f &:= \inf\{\sigma\mid {\rm M}_f(r)\lla \exp^{(q)}(\sigma r^\rho) \}=
 \limsup_{r \to+\infty}\frac{\ln^{(q)}{\rm M}_f(r)}{r^\rho}.
\end{align*}
Let $f(z)$ be an entire function and for some $q\in\N$  we have $0<\rho=\ord_q f<\infty$ and  $0<\sigma=\typ_q f<\infty$.
Then ${\rm M}_f(r)$ behaves like $\exp^{(q+1)}(\sigma r^\rho)$ as $r=|z|\to +\infty$.

In the particular case $q=1$, we get the classical notions of the order $\ord f$ and type $\typ f$ of an entire function~\cite{Evg}.
Let $\ord f<\infty$, then $f(z)$ is said of {\it finite order}.
Classical Adamar's formula connects the growth of an entire function of finite order with an asymptotic of its Taylor coefficients~\cite[3.2.3]{Evg}.

The author studied the complexity functions of solvable Lie algebras, they are entire functions of infinite order as a rule~\cite{Pe97msb}.
In order to describe respective asymptotic of the codimension growth,
the author established  the following analogue of Adamard's formula for entire functions of infinite order.

\begin{Th}[{\cite[Theorem 4.2]{Pe97msb}}]
\label{TSher}
  Suppose that $f(z)=\sum\limits_{n=0}^\infty a_n z^n$  is an entire function.
  Then for any fixed numbers $q\in {\Bbb N}$, $q\ge 3$; $\lambda > 0$ holds
  $$ \limsup_{r\to+\infty} {\frac {\ln^{(q-1)} {\rm M}_f(r)} {r^\lambda}}= \limsup_{n\to\infty} |a_n|^{\lambda/n} \ln^{(q-2)}n .
  $$
\end{Th}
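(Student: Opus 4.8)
Write $A:=\limsup_{r\to+\infty}\ln^{(q-1)}{\rm M}_f(r)/r^{\lambda}$ and $B:=\limsup_{n\to\infty}|a_n|^{\lambda/n}\ln^{(q-2)}n$; it suffices to prove $B\le A$ and $A\le B$ separately, assuming in each case that the right-hand side is finite. The statement may be viewed as an ``iterated Hadamard formula'' (for $q=1,2$ one is close to the classical relation between the order/type of an entire function and the decay of its Taylor coefficients, but with an extra multiplicative constant that the iterated logarithm kills once $q\ge3$). Throughout I will use two elementary properties of the tower functions, valid precisely because $q\ge 3$ (with the convention $\ln^{(0)}=\exp^{(0)}=\mathrm{id}$): $\ln^{(q-2)}\!\big(\exp^{(q-2)}(u)\cdot u^{O(1)}\big)=u\,(1+o(1))$ as $u\to+\infty$, and $\ln^{(q-2)}(n^{1+o(1)})=\ln^{(q-2)}(n)\,(1+o(1))$ as $n\to\infty$.

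For $B\le A$ I would apply Cauchy's estimate $|a_n|\le{\rm M}_f(r)\,r^{-n}$ (valid for all $r>0$) with a near-optimal radius. Fix $\varepsilon>0$; by the definition of $A$ one has ${\rm M}_f(r)\le\exp^{(q-1)}\!\big((A+\varepsilon)r^{\lambda}\big)$ for all large $r$. Choosing $r_n$ by $(A+\varepsilon)r_n^{\lambda}=\ln^{(q-2)}(n/\ln n)$ gives $\ln{\rm M}_f(r_n)\le\exp^{(q-2)}\!\big(\ln^{(q-2)}(n/\ln n)\big)=n/\ln n=o(n)$, so ${\rm M}_f(r_n)^{\lambda/n}=e^{o(1)}$, while $r_n^{-\lambda}=(A+\varepsilon)/\ln^{(q-2)}(n/\ln n)=(A+\varepsilon)(1+o(1))/\ln^{(q-2)}n$ by the second property. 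Multiplying, $|a_n|^{\lambda/n}\ln^{(q-2)}n\le(A+\varepsilon)(1+o(1))$, hence $B\le A+\varepsilon$, and $\varepsilon\downarrow0$.

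For $A\le B$ I would estimate ${\rm M}_f(r)$ from its Taylor series by a maximum-term (Laplace-type) bound. Fix $\varepsilon>0$, set $c:=(B+\varepsilon)^{1/\lambda}$, so $|a_n|\le\big(c/(\ln^{(q-2)}n)^{1/\lambda}\big)^{n}$ for $n\ge n_0$, whence ${\rm M}_f(r)\le p(r)+\sum_{n\ge n_0}e^{\phi_r(n)}$ with $p$ a polynomial and $\phi_r(n)=n\ln(cr)-\frac{n}{\lambda}\ln^{(q-1)}n$. Split the sum at $N(r):=\exp^{(q-2)}\!\big((2cr)^{\lambda}\big)$: for $n>N(r)$ each term is $<2^{-n}$, contributing $O(1)$; for $n_0\le n\le N(r)$ there are at most $N(r)$ terms, each at most $e^{\max_n\phi_r(n)}$, and since $\phi_r(n)>0$ forces $\ln^{(q-1)}n<\lambda\ln(cr)$, i.e.\ $n<n^{*}:=\exp^{(q-2)}((cr)^{\lambda})$, there $\phi_r(n)\le n\ln(cr)\le n^{*}\ln(cr)$. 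Hence
\[
\ln{\rm M}_f(r)\le\ln N(r)+n^{*}\ln(cr)+O(1)=\exp^{(q-3)}\!\big((2cr)^{\lambda}\big)+\exp^{(q-2)}\!\big((cr)^{\lambda}\big)\ln(cr)+O(1),
\]
and for $q\ge 3$ the middle summand dominates (a routine comparison of tower functions: one further exponentiation beats doubling the argument), so $\ln{\rm M}_f(r)\le\exp^{(q-2)}\!\big((cr)^{\lambda}\big)\ln(cr)\,(1+o(1))$. Applying $\ln^{(q-2)}$ and the first property then gives $\ln^{(q-1)}{\rm M}_f(r)\le(cr)^{\lambda}(1+o(1))=(B+\varepsilon)r^{\lambda}(1+o(1))$, hence $A\le B+\varepsilon$, and $\varepsilon\downarrow0$ finishes the proof.

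The two core estimates, Cauchy's inequality and the Laplace-type bound, are short; the real work — and the only place where $q\ge 3$ is used essentially — is the bookkeeping with tower functions: one must check that the logarithmic count $\ln N(r)\sim\exp^{(q-3)}(\cdots)$ of the potentially large Taylor terms is of strictly smaller order than the logarithm $\sim\exp^{(q-2)}(\cdots)$ of the maximal term, and that the accumulated $(1+o(1))$ and $e^{o(n)}$ errors still vanish after the $q-2$ iterated logarithms; for $q\le 2$ they do not, which is exactly why the clean form of the formula requires $q\ge 3$. A minor additional point is to rule out spurious large values of $\phi_r$ for small $n$, which is automatic once $cr>1$ and $n_0$ is taken large enough that $\ln^{(q-1)}n>0$.
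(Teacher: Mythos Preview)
The paper does not actually contain a proof of this theorem; it is quoted from \cite[Theorem~4.2]{Pe97msb} and used as a black box (see the statement with its citation bracket and the absence of any proof environment after it). So there is no ``paper's own proof'' to compare against here.

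That said, your argument is correct and is the natural Hadamard-type proof one would expect in the cited source. Both directions are handled in the standard way: Cauchy's inequality $|a_n|\le {\rm M}_f(r)r^{-n}$ with an optimised radius for $B\le A$, and a maximal-term bound on the Taylor series for $A\le B$. Your tower bookkeeping is sound: the choice $(A+\varepsilon)r_n^{\lambda}=\ln^{(q-2)}(n/\ln n)$ makes $\ln{\rm M}_f(r_n)=o(n)$, so the factor ${\rm M}_f(r_n)^{\lambda/n}$ really is $e^{o(1)}$; and in the other direction the comparison $\exp^{(q-3)}((2cr)^\lambda)=o\big(\exp^{(q-2)}((cr)^\lambda)\ln(cr)\big)$ is exactly the ``one more exponentiation beats doubling the argument'' fact you invoke, valid for every $q\ge 3$ (for $q=3$ it reads $(2cr)^\lambda=o(e^{(cr)^\lambda}\ln(cr))$). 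The two asymptotic identities you isolate at the start --- that $\ln^{(q-2)}$ kills factors of size $u^{O(1)}$ (in fact even $(\ln u)^{O(1)}$, which is what actually arises) next to $\exp^{(q-2)}(u)$, and that $\ln^{(q-2)}(n^{1+o(1)})=(1+o(1))\ln^{(q-2)}n$ --- are precisely where $q\ge 3$ enters, and your remark that the formula fails in this clean form for $q\le 2$ is correct (the classical Hadamard formula for $q=1$ carries the extra constant $(e\rho)^{-1}$). The only cosmetic point: in the second half your multiplier of $\exp^{(q-2)}(u)$ is $\ln(cr)$, which is $O(\ln u)$ rather than $u^{O(1)}$, but this is of course even smaller and the conclusion is unchanged.
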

Denote both sides above by $\sigma$, assume that $0<\sigma<\infty$.
Then roughly speaking, ${\rm M}_f(r)$ behaves like $\exp^{(q-1)}(\sigma r^\lambda)$ as $|z|=r\to+\infty$.
Formally, $\ord_{q-1} f=\lambda$ and $\typ_{q-1} f=\sigma$.
And this is equivalent to the fact that $|a_n|$ behave like $\big(\frac \sigma{\ln^{(q-2)}n}\big)^{n/\lambda}$ as $n\to\infty$.

The crucial step in establishing asymptotic for the codimension growth of solvable Lie algebras
is the following fact on the growth of the respective complexity function~\cite{Pe97msb}:
\begin{Th}[{\cite[Theorem 3.2, Corollary 4.1.]{Pe97msb}}]
\label{TordNN}
  Let $\VV=\NN_{s_q}\cdots\NN_{s_1}$, $q\ge 2$ be
  a polynilpotent variety of Lie algebras and
  $f(z)=\CO(\VV,z)$ its complexity function. Then
\begin{enumerate}
\item
  $\displaystyle
  \lim_{r\to+\infty}
  \frac{\ln^{(q-1)}{\rm M}_f(r)}{r^{s_1}}=\frac{s_2}{s_1}.
  $
\item in terms of our notions of order and type of high level, this fact is equivalent to
$$\ord_{q-1}f(z)=s_1,\qquad  \typ_{q-1}f(z)=s_2/s_1.$$
\end{enumerate}
\end{Th}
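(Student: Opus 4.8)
The plan is to reduce the statement to an exact recursion for the complexity function and then analyse the growth of the resulting tower of entire functions. Write $\VV=\NN_{s_q}\cdots\NN_{s_1}$, let $L=F(\VV,X)$ be its relatively free algebra on $X=\{x_i\}$, and $\widetilde L=L(X)$ the free Lie algebra. Since the verbal ideal of $\VV$ in $\widetilde L$ is built by nesting lower central series and hence lies in $\gamma_{s_1+1}(\widetilde L)$, the quotient $L/\gamma_{s_1+1}(L)=\widetilde L/\gamma_{s_1+1}(\widetilde L)$ is the free nilpotent Lie algebra of class $s_1$, whose complexity function is the polynomial $t_{s_1}(z):=\sum_{n=1}^{s_1}z^n/n$ (the degree-$s_1$ truncation of $\CO(\LL,z)=-\ln(1-z)$). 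Splitting $L$ over the ideal $\gamma_{s_1+1}(L)$ as graded vector spaces gives $\CO(\VV,z)=t_{s_1}(z)+\CO(\gamma_{s_1+1}(L),z)$. By the Shirshov--Witt theorem and the standard structure theory of free polynilpotent Lie algebras, $\gamma_{s_1+1}(L)=\gamma_{s_1+1}(\widetilde L)/(\text{verbal ideal})$ is free in $\NN_{s_q}\cdots\NN_{s_2}$ on a multihomogeneous generating set $Y$ invariant under permutations of $X$ (hence $X$-uniform); comparing the free Lie algebra $\gamma_{s_1+1}(\widetilde L)=L(Y)$ with $\widetilde L/\gamma_{s_1+1}(\widetilde L)$ forces $\CO(\LL,\CO(Y,z))=-\ln(1-z)-t_{s_1}(z)$, whence $\CO(Y,z)=g_{s_1}(z):=1-(1-z)\exp(t_{s_1}(z))$. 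Finally the substitution principle for relatively free algebras over a uniform set — the same mechanism underlying $\CO(U(L),z)=\exp\CO(L,z)$ of Theorem~\ref{Texp}, obtained by iterating Lemma~\ref{Lprod} — yields the recursion
$$\CO(\NN_{s_q}\cdots\NN_{s_1},z)=t_{s_1}(z)+\CO(\NN_{s_q}\cdots\NN_{s_2},g_{s_1}(z)),$$
hence, unfolding, the complexity function is
$$f(z)=\CO(\VV,z)=\sum_{k=1}^{q}t_{s_k}\!\bigl(g_{s_{k-1}}\!\circ\cdots\circ g_{s_1}(z)\bigr)\qquad(\text{empty inner composition for }k=1).$$
In particular $f$ is entire, being a finite sum of compositions of polynomials with functions of the shape $1-(\text{polynomial})\exp(\text{polynomial})$.

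For the growth analysis the base estimate is elementary: on $|z|=r$ one has $\Re t_s(z)\le t_s(r)$ and $|1-z|\le 1+r$, so $(r-1)e^{t_s(r)}-1\le{\rm M}_{g_s}(r)\le 1+(1+r)e^{t_s(r)}$, giving $\ln{\rm M}_{g_s}(r)=t_s(r)+\ln r+O(1)=(r^s/s)(1+o(1))$, and likewise $\ln g_s(r)=(r^s/s)(1+o(1))$ on the positive real axis. Put $G_m:=g_{s_m}\circ\cdots\circ g_{s_1}$. I would prove by induction on $m\ge 1$ that
$$\ln^{(m)}{\rm M}_{G_m}(r)=\Phi_m(r)(1+o(1))\quad\text{and}\quad\ln^{(m)}G_m(r)=\Phi_m(r)(1+o(1)),\qquad r\to+\infty,$$
with $\Phi_1(r)=r^{s_1}/s_1$ and $\Phi_m(r)=(s_2/s_1)r^{s_1}$ for $m\ge 2$. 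For the step $m-1\to m$, monotonicity of the maximum modulus gives the crude bound ${\rm M}_{G_m}(r)\le{\rm M}_{g_{s_m}}({\rm M}_{G_{m-1}}(r))$, and evaluation on the real axis (where $G_{m-1}(r)\to+\infty$) gives ${\rm M}_{G_m}(r)\ge g_{s_m}(G_{m-1}(r))$; with the base estimate these sandwich
$$\tfrac1{s_m}G_{m-1}(r)^{s_m}(1+o(1))\ \le\ \ln{\rm M}_{G_m}(r)\ \le\ \tfrac1{s_m}{\rm M}_{G_{m-1}}(r)^{s_m}(1+o(1)),$$
and $\ln G_m(r)$ equals the left bound up to $(1+o(1))$. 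Applying $\ln^{(m-1)}$ and feeding in $\ln^{(m-1)}G_{m-1}(r)\sim\ln^{(m-1)}{\rm M}_{G_{m-1}}(r)\sim\Phi_{m-1}(r)$, both ends collapse to $\Phi_m(r)(1+o(1))$: the $m-1$ logarithms absorb the factor $1/s_m$ and the $(1+o(1))$'s, leave the extra constant $s_2$ behind exactly once, in the passage $m=1\to m=2$ (this is where $\Phi_1=r^{s_1}/s_1$ becomes $\Phi_2=(s_2/s_1)r^{s_1}$), and for $m\ge 3$ simply reproduce $\Phi_{m-1}$.

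It remains to read off $f$. In $f(z)=\sum_{k=1}^q t_{s_k}(G_{k-1}(z))$ the term $k=q$ dominates: $G_{q-1}$ has more compositions than any $G_{k-1}$ with $k<q$, hence a strictly higher exponential level of growth, so ${\rm M}_f(r)\le\tfrac1{s_q}{\rm M}_{G_{q-1}}(r)^{s_q}(1+o(1))$; and every summand is positive on the real axis, so ${\rm M}_f(r)\ge f(r)\ge t_{s_q}(G_{q-1}(r))=\tfrac1{s_q}G_{q-1}(r)^{s_q}(1+o(1))$. Applying $\ln^{(q-1)}$ and invoking the case $m=q-1$ of the induction (valid since $q-1\ge 1$; for $q=2$ the single outer logarithm turns $\tfrac1{s_2}G_1(r)^{s_2}(1+o(1))$ into $s_2\cdot\tfrac{r^{s_1}}{s_1}(1+o(1))=\tfrac{s_2}{s_1}r^{s_1}(1+o(1))$), the two bounds collapse and give precisely
$$\lim_{r\to+\infty}\frac{\ln^{(q-1)}{\rm M}_f(r)}{r^{s_1}}=\frac{s_2}{s_1},$$
which is part~(1). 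Part~(2) is then immediate from the definitions of order and type of level $q-1$: part~(1) gives $\ln^{(q)}{\rm M}_f(r)=s_1\ln r+O(1)$, so $\ord_{q-1}f=\lim\ln^{(q)}{\rm M}_f(r)/\ln r=s_1$, and then $\typ_{q-1}f=\lim\ln^{(q-1)}{\rm M}_f(r)/r^{s_1}=s_2/s_1$.

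The main obstacle is this iterated-logarithm bookkeeping. The naïve hope that ${\rm M}_{G_m}(r)$ is asymptotic to the real-axis value $G_m(r)$ already fails at $m=2$: the best easy bounds only give ${\rm M}_{G_2}(r)/G_2(r)=O\!\bigl(\exp(\Theta(r^{s_2-1}e^{s_2 r^{s_1}/s_1}))\bigr)$, because a relative error of size $1/r$ between ${\rm M}_{G_1}$ and $G_1$ is amplified by the degree-$s_2$ polynomial $t_{s_2}$. Thus the induction must be run at the level of the $m$-th iterated logarithm, and the point to be verified at each step is that the two-sided bound on $\ln{\rm M}_{G_m}(r)$ — whose ends have ratio tending to infinity — really does collapse after $m-1$ further logarithms; this forces one to track $\ln^{(m)}{\rm M}_{G_m}$ and $\ln^{(m)}G_m$ simultaneously and to keep the lower-order $O(r^{s-1})$, $O(\ln r)$ and $O(1)$ terms under control at every level of the tower. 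By comparison, the algebraic input (free polynilpotent Lie algebras, Shirshov--Witt, and the substitution calculus for complexity functions) is routine.
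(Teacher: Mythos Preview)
The paper does not prove this theorem: it is quoted verbatim from the author's earlier work~\cite{Pe97msb} (Theorem~3.2 and Corollary~4.1 there) and used as a black box, so there is no ``paper's own proof'' to compare against. That said, your outline is exactly the strategy of~\cite{Pe97msb}: first derive the recursion $\CO(\NN_{s_q}\cdots\NN_{s_1},z)=t_{s_1}(z)+\CO(\NN_{s_q}\cdots\NN_{s_2},g_{s_1}(z))$ via the Schreier-type substitution principle for complexity functions (this is precisely the content of~\cite{Pe99JMSciSch_exp}; your derivation of $g_s(z)=1-(1-z)e^{t_s(z)}$ from $-\ln(1-\CO(Y,z))=-\ln(1-z)-t_s(z)$ is correct), and then run an iterated-logarithm induction on the tower $G_m=g_{s_m}\circ\cdots\circ g_{s_1}$ using the sandwich $g_{s_m}(G_{m-1}(r))\le{\rm M}_{G_m}(r)\le{\rm M}_{g_{s_m}}({\rm M}_{G_{m-1}}(r))$.

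Two comments on execution. First, the substitution principle $\CO_X(F(\WW,Y),z)=\CO(\WW,\CO_X(Y,z))$ for an $X$-uniform set $Y$ is not literally ``the same mechanism'' as Theorem~\ref{Texp}; it is a separate combinatorial fact (proved in~\cite{Pe97msb,Pe99JMSciSch_exp}) that you should cite or prove, rather than wave at. Second, your self-diagnosed ``main obstacle'' is real: the ratio ${\rm M}_{G_m}(r)/G_m(r)$ is not bounded, so the induction must indeed be carried at the level of $\ln^{(m)}$, tracking both $\ln^{(m)}{\rm M}_{G_m}(r)$ and $\ln^{(m)}G_m(r)$ simultaneously. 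Your sketch of why both ends of the sandwich collapse to $\Phi_m(r)(1+o(1))$ after $m{-}1$ further logarithms is correct in spirit (the step $m=1\to 2$ deposits the factor~$s_2$, and all later steps only add $O(\ln r)$ before the next logarithm absorbs it), but in a written proof each of these absorptions should be made explicit; this is exactly what the growth lemmas in~\cite{Pe97msb} do.
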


\section{Upper bounds on complexity functions and codimension growth}

The goal of this section is to establish an upper bound
on the complexity function and the codimension growth of a variety of Poisson algebras satisfying
a non-trivial Lie identical relation (Theorem~\ref{TupperP}).
We also obtain a new more clear bound on the complexity function of a Lie PI-algebra (Theorem~\ref{TboundL}).
\subsection{Lie indecomposable words, sets $Q_m$, $R_m$}
Let $X$ be a non-empty set and $X^*$ the set of all words in $X$.
We order $X^*$ lexicographically from the left.
A word $w\in X^*$ is called {\it regular} if for any decomposition $w=ab$ we have $w>ba$.
Let $A(X)$ be the free associative algebra generated by $X$ and
$L(X)\subset A(X)$ the free Lie algebra~\cite{Ba}.
For any $a\in A(X)$ denote by $\hat a$ the senior word in its expansion, first with respect to degree and then lexicographically.
Let $w$ be a regular word, and $[w]\in L(X)$ an arrangement of Lie brackets on it.
An arrangement of brackets is {\it acceptable} provided that $\widehat {[w]}=w$.
Each regular word has an acceptable arrangement of brackets (not necessarily unique).
By fixing for each regular word an acceptable arrangement of brackets, the respective Lie monomials yield
the Lyndon-Shirshov basis for the free Lie algebra $L(X)$~\cite{Ba,BMPZ}.

Let $X=\{x_i| i\in \N\}$.
From now on we consider multilinear words only.
Denote $P_n(X)=\{x_{\pi(1)}\cdots x_{\pi(n)}\mid \pi\in S_n\}\subset X^*\subset A(X)$.
Observe that a word $w\in P_n(X)$ is regular if and only if $w$ starts with the senior letter $x_n$.

Fix an integer $m\ge 1$.
A multilinear word $w$ is called $m$-{\it Lie decomposable} provided that
\begin{equation}\label{m-decomp}
w=a\cdot w_m w_{m-1}\cdots w_1\cdot b,\qquad w_m>w_{m-1}>\cdots> w_1,
\end{equation}
where $w_i$ is regular, i.e. it starts with its senior letter $y_i$, for all $i=1,\ldots,m$.
Remark that the ordering of words above is equivalent to the ordering of their first letters: $y_m>\cdots> y_1$.
If such a presentation does not exist, $w$ is called $m$-{\it Lie indecomposable}.
Denote by $Q_m$ the set of all such words.
Denote by $R_m\subset Q_m$ the subset of all {\it regular $m$-Lie indecomposable} words.
By definition, $1\in Q_m$ and $1\notin R_m$ for $m\ge 1$.
Assume that $w$ above~\eqref{m-decomp} is regular.
Since a regular multilinear word starts with the maximal letter, then $a$ is also regular and
 $a>w_m>\cdots>w_2$ yields another decomposition.
Thus, in case of a regular word $w$, we may assume that $a=1$ in decomposition~\eqref{m-decomp}.

\begin{Lemma}[{\cite[1.2.1]{Razmyslov}}] \label{Lw1}
Consider a multilinear word $w=w'x_nw''$, where $x_n$ is the maximal letter in it.
Then $w\in Q_m$ if and only if $w'\in Q_m$ and $w''\in Q_{m-1}$.
\end{Lemma}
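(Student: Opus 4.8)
The plan is to prove both implications by contraposition, which splits the lemma into three one-line statements: (i) if $w'\notin Q_m$ then $w\notin Q_m$; (ii) if $w''\notin Q_{m-1}$ then $w\notin Q_m$; (iii) if $w\notin Q_m$ then $w'\notin Q_m$ or $w''\notin Q_{m-1}$. Statements (i) and (ii) together give the forward direction $w\in Q_m\Rightarrow(w'\in Q_m$ and $w''\in Q_{m-1})$, while (iii) gives the converse. Two elementary remarks are used throughout. Since $w$ is multilinear, $x_n$ occurs exactly once in it, so the factorization $w=w'x_nw''$ is unique and $x_n$ occurs in neither $w'$ nor $w''$; and since $x_n$ is strictly larger than every other letter of $w$, any nonempty word beginning with $x_n$ is automatically regular (for any split $cd$ of such a word, $cd$ starts with $x_n$ while $dc$ starts with a strictly smaller letter), and it is lexicographically larger than every subword of $w'$ or of $w''$.

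For (i): an $m$-Lie decomposition $w'=a\cdot u_m u_{m-1}\cdots u_1\cdot b$ yields $w=a\cdot u_m\cdots u_1\cdot(b\,x_n\,w'')$, again an $m$-Lie decomposition, so $w\notin Q_m$. For (ii): given an $(m-1)$-Lie decomposition $w''=a\cdot v_{m-1}\cdots v_1\cdot b$, I would put $u:=x_n a$; by the remarks above $u$ is regular and $u>v_{m-1}>\cdots>v_1$, whence $w=w'\cdot u\, v_{m-1}\cdots v_1\cdot b$ is an $m$-Lie decomposition and $w\notin Q_m$. The hard part is precisely step (ii): one must \emph{produce} a new top element $u$ of a decreasing chain of length $m$ out of a chain of length $m-1$, and the only available device is to swallow the solitary maximal letter $x_n$, together with whatever sits between it and the old chain, into a single factor --- which works exactly because a word beginning with $x_n$ is forced to be regular and to dominate everything that follows.

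For (iii), fix an $m$-Lie decomposition $w=a\cdot w_m\cdots w_1\cdot b$ with $w_m>\cdots>w_1$ all regular, and locate the unique occurrence of $x_n$. If it lies inside some $w_i$, then $w_i$, being regular, starts with its own maximal letter; as $x_n$ is the global maximum, $x_n$ must be that leading letter, and since the leading letters strictly decrease ($y_m>\cdots>y_1$), we get $i=m$, i.e. $w_m=x_n\tilde w_m$. This leaves three cases. If $x_n\in a$, write $a=a'x_na''$; then $w'=a'$ and $w''=a''\,w_m\cdots w_1\,b$ contains the decreasing chain $w_{m-1}>\cdots>w_1$, an $(m-1)$-Lie decomposition, so $w''\notin Q_{m-1}$. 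If $x_n$ is the leading letter of $w_m$, then $w'=a$ and $w''=\tilde w_m\,w_{m-1}\cdots w_1\,b$ again carries the $(m-1)$-chain $w_{m-1}>\cdots>w_1$, so $w''\notin Q_{m-1}$. If $x_n\in b$, write $b=b'x_nb''$; then $w'=a\,w_m\cdots w_1\,b'$ is an $m$-Lie decomposition, so $w'\notin Q_m$. In every case $w'\notin Q_m$ or $w''\notin Q_{m-1}$.

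Finally I would check that nothing goes wrong in the degenerate situations --- empty $a$, $b$, $a'$, $a''$ or $\tilde w_m$, and the smallest values of $m$ --- all of which are immediate by inspection. Beyond this bookkeeping, the entire argument rests on a single observation: the unique maximal letter of a multilinear word acts as a ``universal regular head'', which at once allows one to splice a decomposition of $w''$ into a longer one of $w$ and constrains where $x_n$ can appear in any decomposition of $w$.
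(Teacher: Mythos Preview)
Your argument is correct. The three contrapositive pieces (i)--(iii) are exactly the right decomposition, and the key observation---that any word whose first letter is the global maximum $x_n$ is automatically regular and dominates every other subword---is precisely what makes both (ii) and the case analysis in (iii) go through. The localisation of $x_n$ in (iii) is handled cleanly: the regularity of each $w_i$ forces $x_n$, if it falls inside some $w_i$, to be that block's leading letter, and the strict decrease of leading letters then pins $i=m$.

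As to comparison with the paper: the paper does not prove this lemma at all; it is quoted from Razmyslov's monograph~\cite[1.2.1]{Razmyslov} and used as a black box. Your proof is the standard one and matches what one finds in Razmyslov; there is no alternative route on offer here to compare against. One cosmetic remark: in the first branch of (iii) (when $x_n\in a$) you could equally well say that $w''$ inherits the full length-$m$ chain $w_m>\cdots>w_1$ and hence is $m$-Lie decomposable, which a fortiori makes it $(m-1)$-Lie decomposable; your choice to exhibit the shorter chain directly is of course also fine.
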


\begin{Lemma} \label{Lw2}
The sets of ordinary and regular $m$-Lie indecomposable words $Q_m$ and $R_m$
are described by the following initial conditions and recurrence relations:
   \begin{enumerate}
   \item $R_1=\emptyset$, $Q_1=\{1\}$.
   \item $R_m=\{xw_0 \mid  x\in X,\ w_0\in Q_{m-1},\ x>w_0\}$, $m>1 $.
   \item
   $Q_m=\{v_1v_2\cdots v_s \mid v_i\in R_m,\ v_1<\cdots< v_s,\ s\ge 0\}$, $m>1$.
   \end{enumerate}
\end{Lemma}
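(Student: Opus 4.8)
The plan is to derive all three statements from Razmyslov's Lemma~\ref{Lw1} by repeatedly splitting a multilinear word at its (necessarily unique) maximal letter, combined with an induction on $m$ for the interplay between $Q_m$ and $Q_{m-1}$ and an inner induction on word length for the recursion in~(3). Throughout I use that a multilinear word is regular exactly when it begins with its maximal letter (the characterization recalled before Lemma~\ref{Lw1}), and I read the condition $x>w_0$ in~(2) as: $x$ is larger than every letter occurring in $w_0$, i.e.\ $x$ is the maximal letter of the word $xw_0$. Statement~(1) is then immediate: a one-letter word is vacuously regular, so every nonempty multilinear word $w$ admits the trivial presentation $w=a\cdot w_1\cdot b$ of the form~\eqref{m-decomp} with $m=1$ and $w_1$ a single letter, hence is $1$-Lie decomposable and $Q_1\subseteq\{1\}$; on the other hand $1$ has no presentation with a nonempty regular middle factor, so $Q_1=\{1\}$, and since $1$ is not regular, $R_1=\emptyset$.

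For~(2), fix $m>1$. If $w\in R_m$, then $w$ is regular, so $w=xw_0$ with $x$ its maximal letter; applying Lemma~\ref{Lw1} to $w=1\cdot x\cdot w_0$ (whose prefix part $w'=1$ lies in $Q_m$) shows that $w\in Q_m$ is equivalent to $w_0\in Q_{m-1}$. Conversely, if $w_0\in Q_{m-1}$ and $x$ exceeds every letter of $w_0$, then $xw_0$ begins with its unique maximal letter and is therefore regular, while Lemma~\ref{Lw1} again gives $xw_0\in Q_m$; hence $xw_0\in R_m$. This proves~(2).

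For~(3), fix $m>1$ and induct on $|w|$, the case $w=1$ corresponding to $s=0$. If $w\in Q_m$ is nonempty, write $w=w'xw''$ with $x$ its maximal letter; Lemma~\ref{Lw1} gives $w'\in Q_m$ and $w''\in Q_{m-1}$, so by~(2) the suffix $v_s:=xw''$ lies in $R_m$, and by the inductive hypothesis $w'=v_1\cdots v_{s-1}$ with $v_i\in R_m$ and $v_1<\cdots<v_{s-1}$. Since the factors of a multilinear word use pairwise disjoint letters, any two of the regular words $v_i$ are compared in the left-lexicographic order by their leading letters, which are their maximal letters; as every letter of $w'$ is strictly smaller than $x=\max(v_s)$, this yields $v_{s-1}<v_s$, so $w=v_1\cdots v_s$ has the asserted form. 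Conversely, suppose $w=v_1\cdots v_s$ with $v_i\in R_m$ and $v_1<\cdots<v_s$. Then $\max(v_1)<\cdots<\max(v_s)$, so the maximal letter $x$ of $w$ is the leading letter of $v_s$; writing $v_s=xw_0$ with $w_0\in Q_{m-1}$ as in~(2) and applying Lemma~\ref{Lw1} to $w=(v_1\cdots v_{s-1})\cdot x\cdot w_0$ shows that $w\in Q_m$ is equivalent to $v_1\cdots v_{s-1}\in Q_m$ together with $w_0\in Q_{m-1}$; the first holds by the inductive hypothesis and the second by~(2). This closes the induction.

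Everything hinges on Lemma~\ref{Lw1}; the only delicate point, and the one I expect to be the main obstacle, is the bookkeeping in~(3): one must check that peeling the maximal letter off the right end really produces a genuine $R_m$-factor $xw''$ (which is exactly the content of~(2)) and that the remaining prefix $w'$ decomposes with all of its factors dominated by $x$, so that the strict chain $v_1<\cdots<v_s$ is preserved. Both points reduce to the characterization ``regular $=$ begins with the maximal letter'' together with multilinearity, which forces the relevant inequalities among leading letters to be strict. (Although the lemma only asserts the set equality, the factorization in~(3) is in fact unique, by the same argument read as the Chen--Fox--Lyndon theorem with the order on $X$ reversed.)
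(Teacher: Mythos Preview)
Your argument is correct and follows essentially the same route as the paper: both derive~(1) from the definitions, obtain~(2) from the characterization ``regular multilinear $=$ starts with maximal letter'' together with Lemma~\ref{Lw1}, and prove~(3) by repeatedly splitting at the maximal letter via Lemma~\ref{Lw1}. The only cosmetic difference is that the paper phrases the decomposition in~(3) as an iteration (``repeat the process to $w'$'') while you package it as an induction on $|w|$; your treatment of the converse in~(3) is in fact more explicit than the paper's one-line appeal to Lemma~\ref{Lw1}.
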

\begin{proof}
Claim 1 is valid by definitions.
Claim 2. Recall that a regular multilinear word starts with its maximal letter and use Lemma~\ref{Lw1}.

Claim 3. Let $1\ne w\in Q_m $ with the maximal letter $x_n$. By Lemma~\ref{Lw1},
$w=w'x_n u_1$, $w'\in Q_m$ and $u_1\in Q_{m-1}$.
If $w'$ is nonempty, we repeat the process to $w'$ and so on. As a result, we get
\begin{align*} 
 &w=x_{i_s}u_s\cdots x_{i_2}u_2 x_{i_1}u_1,\qquad u_j\in Q_{m-1},\
 j=1,\dots,s;\qquad s\ge 1;\\
 &i_s<\cdots<i_2<i_1=n;\qquad x_{i_t}\mbox{ -- the maximal letter in }
 x_{i_s}u_s\cdots x_{i_t}u_t.
\end{align*}
By the first claim $v_t:=x_{i_t}u_t\in R_m$,
we get $w=v_s\cdots v_1$, where $v_s<\cdots<v_1$, this is the desired decomposition.
Conversely, any word $w$ above belongs to $Q_m$ by Lemma~\ref{Lw1}.
\end{proof}

Let $m\in \N$.
Introduce sequences $a_m(n):=c_n(Q_m)$, $b_m(n):=c_n(R_m)$, $n\ge 0$, and respective exponential generating functions:
\begin{align*} \label{Gqmrmz}
q_m(z)&:=\CO(Q_m,z)=\sum_{n=0}^{\infty}\frac{a_m(n)}{n!}z^n, \qquad m\in \N;\\
r_m(z)&:=\CO(R_m,z)=\sum_{n=1}^{\infty}\frac{b_m(n)}{n!}z^n, \qquad m\in \N.
\end{align*}
\begin{Lemma} \label{Lrq}
The exponential generating functions for the sets of ordinary and regular $m$-Lie indecomposable words $Q_m$ and $R_m$
satisfy the following initial conditions and recurrence relations:
   \begin{enumerate}
    \item $r_1(z)=0$, $q_1(z)=1$;
    \item $  r_m(z)=\int_0^z q_{m-1}(z)\, dz$,\quad  $m>1$;
    \item $ q_m(z)=\exp(r_m(z))=
            \exp\left(\int_0^z q_{m-1}(z)\,dz\right)$,\quad  $m>1$.
   \end{enumerate}
\end{Lemma}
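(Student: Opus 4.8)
The plan is to derive the three items of Lemma~\ref{Lrq} directly from the combinatorial description of $Q_m$ and $R_m$ given in Lemma~\ref{Lw2}, by passing to exponential generating functions via Lemma~\ref{Lprod} and Theorem~\ref{Texp}. Item~1 is immediate: from $R_1=\emptyset$ we get $b_1(n)=0$ for all $n$, hence $r_1(z)=0$; from $Q_1=\{1\}$ we get $a_1(0)=1$ and $a_1(n)=0$ for $n\ge 1$, hence $q_1(z)=1$. It should also be checked that $Q_m$ and $R_m$ are $X$-uniform so that the complexity functions are well defined; this follows by induction from the recurrences of Lemma~\ref{Lw2}, since each construction there is symmetric in the letters of $X$.

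For item~2, I would count multilinear words of degree $n$ in $R_m$ using the description $R_m=\{x w_0\mid x\in X,\ w_0\in Q_{m-1},\ x>w_0\}$. Fixing a multilinear set $\widetilde X=\{x_{i_1},\dots,x_{i_n}\}$, the leading letter $x$ of such a word must be the \emph{maximal} letter of $\widetilde X$ (because $x>w_0$ means $x$ exceeds every letter of $w_0$), and then $w_0$ ranges over all multilinear words of degree $n-1$ in the remaining $n-1$ letters that lie in $Q_{m-1}$. Hence $b_m(n)=a_{m-1}(n-1)$ for $n\ge 1$, and $b_m(0)=0$. Translating to generating functions, $r_m(z)=\sum_{n\ge 1}\frac{a_{m-1}(n-1)}{n!}z^n=\sum_{k\ge 0}\frac{a_{m-1}(k)}{(k+1)!}z^{k+1}$, which is exactly $\int_0^z q_{m-1}(t)\,dt$ since term-by-term integration of $\frac{a_{m-1}(k)}{k!}z^k$ gives $\frac{a_{m-1}(k)}{(k+1)!}z^{k+1}$. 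The interchange of summation and integration is justified because $q_{m-1}(z)$ is a convergent power series (indeed, as will follow, an entire function) so one integrates within the radius of convergence.

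For item~3, I would use the description $Q_m=\{v_1v_2\cdots v_s\mid v_i\in R_m,\ v_1<\cdots<v_s,\ s\ge 0\}$, i.e. $Q_m$ consists of all \emph{strictly increasing} products of elements of $R_m$. This is precisely the ``set-exponential'' construction: if one forms unordered finite subsets $\{v_1,\dots,v_s\}$ of $R_m$-words on disjoint letter sets and then writes them in the unique increasing order, one obtains each element of $Q_m$ exactly once. Counting multilinearly, $a_m(n)=\sum_{s\ge 0}\frac{1}{s!}\sum b_m(n_1)\cdots b_m(n_s)\binom{n}{n_1,\dots,n_s}$ where the inner sum is over compositions $n_1+\cdots+n_s=n$ with $n_j\ge 1$; the factor $\frac{1}{s!}$ accounts for the fact that the order is forced to be increasing rather than arbitrary. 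Dividing by $n!$ and summing, this is the standard identity $q_m(z)=\sum_{s\ge 0}\frac{r_m(z)^s}{s!}=\exp(r_m(z))$. Alternatively, and more in the spirit of the paper, one can invoke Lemma~\ref{Lprod} and Theorem~\ref{Texp}: the set of all (not necessarily increasing) finite products of distinct $R_m$-words is the analogue of a free associative algebra on $R_m$ as ``generators,'' the increasing-product condition selects the ``symmetric'' part, and Theorem~\ref{Texp} applied to the (abelian) Lie algebra spanned by $R_m$ yields $\CO = \exp \CO(R_m,z) = \exp(r_m(z))$. Substituting item~2 gives $q_m(z)=\exp\!\left(\int_0^z q_{m-1}(t)\,dt\right)$.

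The only real subtlety — the ``main obstacle'' — is bookkeeping the combinatorial exponential correctly: one must verify that the strictly-increasing-product condition in Lemma~\ref{Lw2}(3) is genuinely the reason the $\frac{1}{s!}$ appears, i.e. that distinct increasing products are distinct words and every $Q_m$-word arises exactly once, with no over- or under-counting from words $v_i$ sharing letters (they cannot, since we are in the multilinear setting and the $v_i$ use disjoint letter sets). Once this is pinned down, item~3 is the classical exponential formula for labelled structures~\cite{GouJac}, and the rest is routine manipulation of power series within their region of convergence.
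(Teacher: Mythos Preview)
Your proof is correct and follows essentially the same route as the paper's: item~1 from the definitions, item~2 from the observation that the first letter of a word in $R_m\cap P_n$ must be the maximal letter $x_n$ so that $b_m(n)=a_{m-1}(n-1)$, and item~3 by recognizing the increasing-product description of $Q_m$ as a PBW-type construction and invoking Theorem~\ref{Texp}. Your additional remarks on $X$-uniformity and on the direct exponential-formula derivation of item~3 are sound but not needed beyond what the paper does.
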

\begin{proof}
1) By definition, any non-empty word is $1$-Lie decomposable,
hence $a_1(n)=0$, $n>0$, and by definition  $a_1(0)=1$. We get $q_1(z)=1$ and $r_1(z)=0$.

2) By Lemma~\ref{Lw2}, for any $w\in R_m\cap P_n(X^*)$ we have
$w=x_nu$, where $u\in Q_{m-1}\cap P_{n-1}(X^*)$, hence $c_n(R_m)=c_{n-1}(Q_{m-1})$.
Since $r_m(0)=b_m(0)=0$, in terms of generating functions we get
$r_m(z)=\int_0^z q_{m-1}(z)\, dz$.

3) The ordered products in the third claim of Lemma~\ref{Lw2} are the same as in PBW-theorem.
Applying Theorem~\ref{Texp},  we get
$\CO(Q_m,z)=\exp(\CO(R_m,z))=\exp(r_m(z))$.
\end{proof}

\subsection{Sets $\tilde Q_m$, $\tilde R_m$  and their generating functions}
Let $f(z)=\sum_{n=0}^\infty a_n z^n $ and $g(z)=\sum_{n=0}^\infty b_n z^n $ be series with real coefficients.
We write $f(z)\preceq g(z)$ to denote that $a_n\le b_n$ for all $n\ge 0$.

Define a sequence of multilinear words in $X$
using the following initial conditions and recurrence relations:
\begin{equation}\label{tildeRQ}
\begin{split}
   \tilde R_1&:=\emptyset,\quad  \tilde Q_1:=\{1\};\\
   \tilde R_m&:=\{xw_0 \mid  x\in X,\ w_0\in \tilde Q_{m-1}\},\qquad  m>1;\\
   \tilde Q_m&:=\{v_1v_2\cdots v_s \mid v_i\in \tilde R_m,\ v_1<\cdots< v_s,\ s\ge 0\},\qquad m>1.
\end{split}
\end{equation}
The difference with recurrences of Lemma~\ref{Lw1} is that for $\tilde R_m$ we omitted the condition that $x>w_0$.
Next, introduce related integer sequences and exponential generating functions:
\begin{align*} 
\tilde a_m(n)&:=c_n(\tilde Q_m),\qquad \tilde b_m(n):=c_n(\tilde R_m),\qquad n\ge 0,\ m\in\N;\\
\tilde q_m(z)&:=\CO(\tilde Q_m,z)=\sum_{n=0}^{\infty}\frac{\tilde a_m(n)}{n!}z^n, \qquad m\in \N;\\
\tilde r_m(z)&:=\CO(\tilde R_m,z)=\sum_{n=1}^{\infty}\frac{\tilde b_m(n)}{n!}z^n, \qquad m\in \N.
\end{align*}
\begin{Lemma}\label{LtildeQR}
The exponential generating functions of the sets $\tilde Q_m$, $\tilde R_m$ have the following properties:
\begin{enumerate}
\item $\tilde a_m(n)$, $\tilde b_m (n)$ are nonnegative integers for all $m\ge 1$, $n\ge 0$.
\item $r_m(z)\preceq \tilde  r_m(z)$ and $q_m(z)\preceq \tilde  q_m(z)$ for $m\ge 1$.
\item $\tilde q_1(z)=1$; $\tilde q_2(z)=\exp z$ and
\begin{equation}\label{tilde_q}
\tilde q_m(z)=\underbrace{\exp(z\exp(z \exp(\cdots(z\exp(z\exp}\limits_{m-1 \text{\rm{ times }} \exp } z))  \cdots))),\quad m>1.
\end{equation}
\item $\tilde r_2(z)=z$ and
\begin{equation}\label{tilder}
\tilde r_m(z)=z\underbrace{\exp(z \exp(\cdots(z\exp(z\exp}\limits_{m-2 \text{\rm{ times }} \exp } (z)))  \cdots)),\quad m>1.
\end{equation}
\end{enumerate}
\end{Lemma}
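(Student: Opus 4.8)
The plan is to verify the four claims essentially by induction on $m$, starting from the recurrences in \eqref{tildeRQ} and translating them into operations on exponential generating functions, exactly as in Lemmas~\ref{Lw2} and~\ref{Lrq}. For claim~(1), I would argue that $\tilde Q_m$ and $\tilde R_m$ are $X$-uniform sets of multilinear words — this follows because the recurrences only refer to relative order of variables and to degree, never to specific indices, so $c_n(\tilde Q_m,\widetilde X)$ is independent of the choice of $\widetilde X$. Consequently $\tilde a_m(n)$ and $\tilde b_m(n)$ are honest cardinalities of finite sets, hence nonnegative integers; the base case $\tilde R_1=\emptyset$, $\tilde Q_1=\{1\}$ gives $\tilde b_1(n)=0$, $\tilde a_1(0)=1$, $\tilde a_1(n)=0$ for $n>0$.

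For claim~(2), I would induct on $m$. The base case $m=1$ is equality. For the step, note that the recurrence for $\tilde R_m$ drops only the constraint $x>w_0$, so every word counted by $R_m$ is also counted by $\tilde R_m$ (with the same multilinear support), giving $c_n(R_m)\le c_n(\tilde R_m)$, i.e. $r_m(z)\preceq\tilde r_m(z)$; and since the passage $Q_m=\{v_1\cdots v_s: v_i\in R_m,\ v_1<\cdots<v_s\}$ is identical in form for $\tilde Q_m$, the generating-function identity $q_m=\exp(r_m)$ from Lemma~\ref{Lrq} and its tilde-analogue (established in the next step) combine with the monotonicity of $\exp$ on coefficientwise-nonnegative series to give $q_m(z)\preceq\tilde q_m(z)$. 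I would cite Lemma~\ref{Lprod} / Theorem~\ref{Texp} to justify that both operations (integration for $R_m\rightsquigarrow$ prepending a maximal letter, PBW-style ordered products for $Q_m$) behave well under $\preceq$.

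For claims~(3) and~(4), the key observations are: prepending an arbitrary letter $x\in X$ to a word $w_0\in\tilde Q_{m-1}$ (with no order constraint) multiplies, at the level of a fixed multilinear support of size $n$, by a factor of $n$ (the choice of which variable goes in front), which on exponential generating functions is precisely the operator $f(z)\mapsto z\,f(z)$ — not $\int_0^z f\,dz$ as in the constrained case. Hence $\tilde r_m(z)=z\,\tilde q_{m-1}(z)$ for $m>1$. And the ordered-product recurrence for $\tilde Q_m$ is again a PBW/symmetric-algebra construction over the "alphabet" $\tilde R_m$, so by Theorem~\ref{Texp} (or directly by Lemma~\ref{Lprod} summed over $s$) we get $\tilde q_m(z)=\exp(\tilde r_m(z))=\exp(z\,\tilde q_{m-1}(z))$. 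Starting from $\tilde q_1(z)=1$ this yields $\tilde q_2(z)=\exp(z\cdot 1)=\exp z$ and then, unwinding $\tilde q_m=\exp(z\,\tilde q_{m-1})$, the iterated-exponential formula \eqref{tilde_q}; substituting back into $\tilde r_m=z\,\tilde q_{m-1}$ gives \eqref{tilder}, with $\tilde r_2(z)=z\,\tilde q_1(z)=z$.

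The only real point requiring care — and the step I expect to be the main obstacle — is the justification that prepending an unconstrained letter corresponds to the operator $z\cdot(-)$ rather than $\int_0^z(-)$ on exponential generating functions, and more generally that the $\tilde Q_m$ recurrence genuinely produces an $X$-uniform set so that Theorem~\ref{Texp} applies verbatim; once $X$-uniformity and the $z\cdot(-)$ translation are pinned down, claims (3)–(4) are a routine unwinding of the recursion and claim (2) follows by coefficientwise monotonicity of $\exp$.
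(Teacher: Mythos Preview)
Your proposal is correct and follows essentially the same route as the paper: the paper also derives $\tilde r_m(z)=z\,\tilde q_{m-1}(z)$ by viewing $\tilde R_m$ as the concatenation product $X\cdot\tilde Q_{m-1}$ and applying Lemma~\ref{Lprod} with $\CO(X,z)=z$, then $\tilde q_m=\exp(\tilde r_m)$ via Theorem~\ref{Texp}, and unwinds the recursion. The only cosmetic difference is that for Claim~(2) the paper argues directly the set inclusions $R_m\subset\tilde R_m$, $Q_m\subset\tilde Q_m$ by induction on~$m$ (using that $\tilde R_m$ is built from $\tilde Q_{m-1}\supset Q_{m-1}$ with a weaker constraint), which immediately gives the coefficientwise bounds without passing through monotonicity of $\exp$; your route via $q_m=\exp(r_m)\preceq\exp(\tilde r_m)=\tilde q_m$ is equivalent but slightly less direct.
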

\begin{proof}
By combinatorial construction above one checks by induction on $m$
that $R_m\subset \tilde R_m$, $Q_m\subset \tilde Q_m$ for $m\ge 1$, thus yielding Claims 1,2.

We compute the complexity functions for all lines~\eqref{tildeRQ}.
In the second case, $\tilde R_m$ is a product of the sets $X$ and $\tilde Q_{m-1}$ and we apply Lemma~\ref{Lprod}.
In the third case, we have a PBW-type ordering and we use Theorem~\ref{Texp}.
\begin{align*}
\tilde q_1(z)&=\CO(\tilde Q_1,z)=1;\\
\tilde r_m(z)&=\CO(\tilde R_m,z)=\CO(X,z)\CO(\tilde Q_{m-1},z)=z\tilde q_{m-1}(z), \quad m>1;\\
\tilde q_m(z)&=\CO(\tilde Q_m,z)=\exp (\CO(\tilde R_m,z))=\exp(\tilde r_m(z)), \quad m>1.
\end{align*}
Theses recurrence relations yield the formulas of Claims 3,4.
\end{proof}

\subsection{Upper bonds on complexity functions and codimension growth}
First, we get a new clear bound in terms of generating functions for Lie PI-algebras.
\begin{Th}\label{TboundL}
Let $\VV$ be a variety of Lie algebras satisfying a nontrivial identity of degree $m$ and
$\CO(\VV,z)=\sum\limits_{n=1}^\infty \frac {c_n(\VV)}{n!} z^n$ its complexity function.
The following coefficientwise bound   holds:
$$
\CO(\VV,z)\preceq \tilde r_m(z)=z\underbrace{\exp(z \exp(\cdots(z\exp(z\exp}\limits_{m-2 \text{\rm{ times }} \exp } (z)))  \cdots)).
$$
\end{Th}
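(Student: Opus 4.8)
The plan is to bound the codimension sequence by the combinatorial count of regular $m$-Lie indecomposable multilinear words, that is, to prove
\begin{equation*}
c_n(\VV)\le b_m(n)=c_n(R_m),\qquad n\ge 1,
\end{equation*}
and then to feed this into Lemmas~\ref{Lrq} and~\ref{LtildeQR}. For the inequality I would start from the Lyndon--Shirshov normal form: $P_n(\LL,X)$ has a basis of Lie monomials $[w]$, where $w$ ranges over regular multilinear words of degree $n$ with $\widehat{[w]}=w$, and the images of these span $P_n(\VV,X)$. The substantive step is Razmyslov's straightening procedure. After replacing the given nontrivial identity of degree $m$ by a multilinear consequence and relabelling variables so that the Lie monomial of largest leading word occurs with nonzero (hence invertible) coefficient, one obtains a rewriting rule that expresses $[w]$, whenever $w$ admits an $m$-Lie decomposition $w=a\,w_m w_{m-1}\cdots w_1\,b$ with regular blocks $w_m>\cdots>w_1$, as a linear combination of monomials $[w']$ with $w'<w$ in a well-founded monomial order on multilinear words of degree $n$. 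Iterating this terminates and leaves a spanning set of $P_n(\VV,X)$ indexed by the regular $m$-Lie indecomposable words, i.e.\ by $R_m$; Lemma~\ref{Lw1} is precisely the combinatorial core of this elimination. Hence $c_n(\VV)\le c_n(R_m)=b_m(n)$.

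The rest is formal. Dividing by $n!$ and summing over $n\ge 1$, the coefficientwise bound $c_n(\VV)\le b_m(n)$ becomes $\CO(\VV,z)\preceq\sum_{n\ge 1}\frac{b_m(n)}{n!}z^n=r_m(z)$. By Lemma~\ref{LtildeQR}(2) we have $r_m(z)\preceq\tilde r_m(z)$, and by Lemma~\ref{LtildeQR}(4) the series $\tilde r_m(z)$ is exactly the tower~\eqref{tilder}, i.e.\ $z\exp(z\exp(\cdots(z\exp z)\cdots))$ with $m-2$ exponentials, as written in the statement. Chaining the two relations $\preceq$ gives $\CO(\VV,z)\preceq\tilde r_m(z)$, which is the assertion; in particular $\CO(\VV,z)$ is entire, recovering Theorem~\ref{Traz}.

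I expect the straightening step to be the main obstacle, and within it the characteristic-free part: a nontrivial homogeneous identity of degree $m$ need not fully linearize to a nontrivial multilinear identity of the same degree when $\ch K\le m$, so one must argue with partial linearizations of the given identity, keeping track of which occurrences of each variable have been split, or invoke the version of Razmyslov's rewriting already adapted to an arbitrary ground field. Once the elimination of $m$-Lie decomposable leading words is granted, the inequality $c_n(\VV)\le c_n(R_m)$ and everything downstream of it are immediate.
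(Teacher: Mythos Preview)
Your proposal is correct and follows essentially the same approach as the paper: show that the multilinear component is spanned by $[w]$ with $w$ regular $m$-Lie indecomposable via a Razmyslov-type straightening using the linearized identity, deduce $c_n(\VV)\le b_m(n)$, and then chain $\CO(\VV,z)\preceq r_m(z)\preceq \tilde r_m(z)$ through Lemma~\ref{LtildeQR}. The paper handles the characteristic issue you flag simply by invoking ``standard linearization'' with a citation to~\cite{Ba,Drensky}, so that step is not argued in the proof itself there either.
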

\begin{proof}
Let us prove that $P_n(\VV,\{x_1,\dots,x_n\})$ is a linear span of $[w]$,
where $w$ are regular $m$-Lie indecomposable words.
The standard linearization~\cite{Ba,Drensky} yields a multilinear identity of degree $m$:
$$[X_m,X_{m-1},\dots,X_1]\equiv\sum_{e\ne\pi\in S_{m-1}}
  \lambda_\pi[X_{m},X_{\pi(m-1)},\dots,X_{\pi(1)}],\qquad \lambda_\pi\in K.$$
Let   $w\in P_n(X^*)$ be an $m$-Lie decomposable regular word and
$[w]\in P_n(\VV,\{x_1,\dots,x_n\})$ a corresponding Lie monomial given by an acceptable arrangement of brackets.
As we observed above~\eqref{m-decomp},
\begin{align*}
&w=w_m w_{m-1}\cdots w_1 \cdot b,\quad w_m>\cdots>w_1; \\
&w_t=y_t x_{t1}\cdots x_{ts_t}, \quad y_t,x_{tj}\in X,\quad
 y_t>x_{tj},\  1\le j\le s_t;\quad 1\le t\le m.
\end{align*}
Consider the subword $u=w_m w_{m-1}\cdots w_1$.
The following arrangements of brackets are acceptable
\begin{align*}
[w_t]&= [y_t,x_{t1},\dots, x_{ts_t}]=w_t+w_t^*,\qquad 1\le t\le m;\\
{[u]}&=  [[w_m],[w_{m-1}],\dots,[w_1]]=u+u^*,
\end{align*}
where $u^*$ denotes a linear combination of words smaller than $u$, we keep this notation $^*$ below.
Introduce the following regular words and acceptable arrangements of brackets
\begin{equation*}
\begin{split}
u_{\pi}  &=w_m w_{\pi(m-1)}\cdots w_{\pi(1)}<u, \\
{}[u_{\pi}]&=[[w_m],[w_{\pi(m-1)}],\dots, [w_{\pi(1)}]]=
         u_{\pi}+u_{\pi}^*,
                      \qquad\quad
\end{split}
e\ne \pi\in S_{m-1}.
\end{equation*}
Recall that $w$ and $u$ start with the maximal letter $x_n$.
Instead of $w=ub$ we take $w':=x_n b$, which is regular,
consider the left-normed arrangement of brackets on it, which is clearly acceptable, and expand it:
$[w']:=[x_nb]=x_n b+\sum_i\mu_i a_i x_n b_i$, where $\mu_i\in K$, $a_i,b_i\in X^*$.
By substitution $x_n:=[u]$,
we obtain an acceptable arrangement of brackets for the word $w$ as $[w]_1:=[[u],b]$.
It is acceptable, i.e. $\widehat{[w]_1}=w$, hence,
$[w]_1-[w]\in A(X)$ is a linear combination of words smaller than $w$.
We use the identity
\begin{align*}
[w]_1&=\big[[[w_m],\dots,[w_1]],b\big]=
 \sum_{e\ne\pi\in S_{m-1}}
  \lambda_\pi [[[w_{m}],[w_{\pi(m-1)}],\dots,[w_{\pi(1)}]],b]=\\
&=\sum_{e\ne\pi\in S_{m-1}}\lambda_\pi
   \big((u_{\pi}+u_{\pi}^*)b+   \sum_i \mu_i  a_i(u_{\pi}+u_{\pi}^*)b_i\big).
\end{align*}
We get words smaller than $w$.
Hence, $[w]\in A(X)$ is a linear combination of words smaller than $w$.
Recall, that a basis of $L(X)$ consists of $[v]=v+v^*$, where $v$ are regular words.
Therefore, $[w]\in L(X)$ is expressed via elements $[v]$,
where $v$ are regular words smaller than $w$.
Since we stay in the finite dimensional component $P_n(\VV,\{x_1,\dots,x_n\})$,  the process must stop.

We proved that, $c_n(\VV)\le b_m(n)$, $n\in\N$;
in terms of complexity functions $\CO(\VV,z)\preceq r_m(z)$.
Applying bounds of Lemma~\ref{LtildeQR}, we get $\CO(\VV,z)\preceq \tilde r_m(z)$ and use~\eqref{tilder}.
\end{proof}

Now we prove the first main result of the paper.
\begin{proof}[Proof of Theorem~\ref{TupperP}]
Let $X=\{x_i| i\in\N\}$.
It is sufficient to consider the case of the relatively free Poisson algebra $P(X):=F(\WW,X)$
defined by a unique nontrivial multilinear Lie identity of degree~$m$.
Consider the subalgebra generated by $X$ with respect to the Lie bracket $\tilde L(X)\subset P(X)$.
Let also $L(X)=L(\VV,X)$ be the relatively free Lie algebra defined by the same Lie identity and
$S(X)=S(L(X))$ the respective symmetric Poisson algebra, which is multigraded.
Clearly, we have an epimorphism $\phi: L(X)\to \tilde L(X)$.
Since the symmetric algebra is a polynomial ring generated by a basis of $L(X)$,
we get an epimorphism of commutative algebras $\bar \phi: S(X)\to P(X)$.
Let us check that $\bar\phi$ is a homomorphism with respect to the Poisson bracket as well.
We know that $\phi $ is a homomorphism of Lie algebras.
Computations below show that we get a Lie homomorphism for elements of higher degree in the symmetric algebra.
For example, let $a,b,c\in L(X)$, then
\begin{multline*}
\bar \phi(\{ab,c\})=\bar \phi (a\{b,c\}+b\{a,c\})=\phi(a)\phi(\{b,c\})+\phi(b) \phi(\{\a,c\})\\
=\phi(a)\{ \phi(b),\phi(c)\}+\phi(b) \{\phi (a),\phi(c)\}=\{\phi(a)\phi(b),\phi(c)\}
= \{\bar \phi(ab),\bar\phi(c)\}.
\end{multline*}
We use Theorem~\ref{Texp}, the bound of Theorem~\ref{TboundL}, Lemma~\ref{LtildeQR}, and~\eqref{tilde_q}
\begin{align*}
\CO(\WW,z)=\CO(P(X),z)\preceq \CO(S(X),z)=\exp \CO(L(X),z)\preceq \exp \tilde r_m(z)=\tilde q_m(z),
\end{align*}
thus yielding the first claim.

The series $\tilde q_m(z)$ has nonnegative coefficients due to~\eqref{tilde_q}.
By induction on $m$, one checks that
\begin{align*}
\lim_{r\to +\infty} \frac{ \ln^{(m-1)}  {\mathrm M}_{\tilde q_m}(r)}{r^1}=\lim_{r\to +\infty} \frac{ \ln^{(m-1)} \tilde q_m(r)}{r^1}=1.
\end{align*}
Recall that $\tilde q_m(z)=\sum_{n=0}^\infty \frac {\tilde a_m(n)}{n!}z^n$. We apply Theorem~\ref{TSher}, where  $\lambda=1$:

\begin{align*}
1&=\limsup_{r\to+\infty} {\frac {\ln^{(m-1)} {\rm M}_{\tilde q_m} (r)} {r^1}}= \limsup_{n\to\infty} \Big(\frac {\tilde a_m(n)}{n!}\Big)^{1/n} \ln^{(m-2)}n,\\
&\Big(\frac {\tilde a_m(n)}{n!}\Big)^{1/n} \ln^{(m-2)}n\le 1+o(1),\qquad n\to\infty,\\
&c_n(\WW)\le \tilde a_m(n) \le \frac{n!} {(\ln^{(m-2)}n)^n}(1+o(1))^n, \qquad n\to\infty. \qedhere
\end{align*}
\end{proof}

\section {Complexity functions of centre-by-metabelian Lie algebras}

In this section, we compute the complexity function of centre-by-metabelian Lie algebras, which is applied in the next section.
A Lie algebra is {\it centre-by-metabelian} provided that it satisfies the identity:
\begin{equation}\label{cent_metab}
[[[X_1,X_2],[X_3,X_4]],X_5]\equiv 0.
\end{equation}
The class of all such Lie algebras is a variety denoted $\CC:=[\AA^2,\EE]$~\cite[4.8.6]{Ba}.
Centre-by-metabelian groups, Lie algebras, and Lie rings were studied in more details in~\cite{Kuz77,ManSto14}.
This class proved to be important.
So, in this class a non-finitely based Lie ring without torsion was constructed~\cite{Kra09}.

\begin{Th} \label{Tcenterby}
Let $\CC$ be the variety of centre-by-metabelian Lie algebras over a field $K$. Then
\begin{enumerate}
\item $\CO(\CC,z)\preceq \frac{z^2}2 \exp(z)+z -\frac {z^3} 6$;
\item $\CO(\CC,z)= \frac{z^2}2 \exp(z)+z -\frac {z^3} 6$ for $\ch K=2$;
\item $\CO(\CC,z)= \frac{z^2}2 \exp(z)+2z -\sh(z) $ for $\ch K\ne 2$.
\end{enumerate}
\end{Th}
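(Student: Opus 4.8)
The plan is to separate off the metabelian part of the free centre-by-metabelian Lie algebra, reduce everything to a dimension count for the multilinear part of its second commutator ideal, and then assemble the complexity function. Let $M=F(\CC,X)$ be the relatively free centre-by-metabelian Lie algebra on $X=\{x_i\mid i\in\N\}$, put $M'=[M,M]$ and $M''=[M',M']$. By the defining identity~\eqref{cent_metab} the ideal $M''$ lies in the centre of $M$, and $M/M''=F(\AA^2,X)$ is the free metabelian Lie algebra. Since $M''$ and the multilinear subspaces $P_n$ are multigraded, the sequence $0\to M''\to M\to M/M''\to 0$ restricts in each multilinear degree to $0\to P_n\cap M''\to P_n(M)\to P_n(M/M'')\to 0$, so that $c_n(\CC)=c_n(\AA^2)+d_n$ with $d_n:=\dim(P_n\cap M'')$. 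Using the classical values $c_1(\AA^2)=1$, $c_n(\AA^2)=n-1$ for $n\ge 2$, equivalently $\CO(\AA^2,z)=z+1+(z-1)\exp z$, the theorem becomes equivalent to
\begin{align*}
d_n&=0 \quad (n\le 3);\\
d_n&=\binom{n-1}{2}\quad (n\ge 4,\ \ch K=2);\\
d_n&=\binom{n-1}{2}-[\,n\text{ odd}\,]\quad (n\ge 4,\ \ch K\ne 2).
\end{align*}

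Next I would describe $M''$ structurally. Being central, the Lie product induces an alternating $K$-bilinear surjection $V\times V\to M''$, where $V:=M'/M''$ is the commutator ideal of the free metabelian algebra $M/M''$; recall that $V$ is a module over the polynomial ring $R=K[x_i\mid i\in\N]$ (with $x_i$ acting as $\ad x_i$, the actions commuting because we are in the metabelian quotient), and that its multilinear part is classical: $P_S(V)$ has dimension $|S|-1$ for $|S|\ge 2$, with basis the left-normed monomials $[x_i,x_{\max S},x_{j_1},\dots,x_{j_{|S|-2}}]$ ($i\in S\setminus\{\max S\}$, the remaining indices increasing and freely permutable inside $V$). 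Applying the Jacobi identity in $M$ to a triple with one entry outside $M'$, together with centrality of $M''$, yields the \emph{sliding relation} $[\,v\cdot\ad x_i,\ w\,]=-[\,v,\ w\cdot\ad x_i\,]$ for $v,w\in V$; since $M$ is free, antisymmetry and the sliding relations generate all relations, so $M''\cong\Lambda^2_K V/(\text{sliding})$.

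The third and main step is the count of $\dim(P_n\cap M'')$. Writing each basis monomial of $P_S(V)$ as $[x_a,x_b]\cdot\ad x_{c_1}\cdots$ and iterating the sliding relation, one pushes all the ``tail'' letters onto one factor and, using that $n=\max([n]\setminus\{a,b\})$ whenever $n\notin\{a,b\}$, brings $x_n$ into the two-letter factor; this reduces $P_n\cap M''$ to the span of the elements $[[x_i,x_n],v]$ with $i\in[n-1]$ and $v$ a basis monomial of $P_{[n]\setminus\{i,n\}}(V)$, and the remaining instances of the sliding and Jacobi relations then cut this down further to a spanning set of size $\binom{n-1}{2}$, the count being independent of $\ch K$. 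This already gives $d_n\le\binom{n-1}{2}$ in every characteristic, hence, using $\sum_n\binom{n-1}{2}\frac{z^n}{n!}=\exp(z)(\frac{z^2}{2}-z+1)$ and the cancellation $(z-1)\exp z+\exp(z)(\frac{z^2}{2}-z+1)=\frac{z^2}{2}\exp z$, claim (1): $\CO(\CC,z)=\CO(\AA^2,z)+\CO(M'',z)\preceq z+1+(z-1)\exp z+\bigl(\exp(z)(\frac{z^2}{2}-z+1)-1-\frac{z^3}{6}\bigr)=\frac{z^2}{2}\exp z+z-\frac{z^3}{6}$. For the exact values one must determine the rank of the sliding system. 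When $\ch K=2$ I would prove that the size-$\binom{n-1}{2}$ spanning set is linearly independent --- e.g.\ by evaluating it on a concrete centre-by-metabelian Lie algebra modelling the ``generic'' part of $M''$, or via the known monomial basis of the free centre-by-metabelian Lie ring --- getting $d_n=\binom{n-1}{2}$ and claim (2). When $\ch K\ne 2$ one extra relation of the form $2\cdot(\text{element})=0$ survives in each odd degree $n\ge 5$ (it appears when antisymmetry is applied to an element that the sliding relations equate with its own negative), lowering the dimension by one; then $\CO(M'',z)=\exp(z)(\frac{z^2}{2}-z+1)-1-\frac{z^3}{6}-\bigl(\sh z-z-\frac{z^3}{6}\bigr)$, and adding $\CO(\AA^2,z)$ gives $\frac{z^2}{2}\exp z+2z-\sh z$, which is claim (3).

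The hard part is the third step. Both the combinatorial reduction of the sliding system to a minimal spanning set, and --- especially --- the proof of linear independence in characteristic $2$, are delicate: the sliding relations interlock in a Koszul-type pattern, so controlling their rank uniformly in the characteristic, and in particular isolating the single $2$-torsion element responsible for the discrepancy between $\ch K=2$ and $\ch K\ne 2$, is where the real work lies.
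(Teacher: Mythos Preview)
Your overall architecture matches the paper exactly: split $c_n(\CC)=c_n(\AA^2)+d_n$ via the short exact sequence, compute $\CO(\AA^2,z)=1+z+(z-1)e^z$, bound $d_n\le\binom{n-1}{2}$ for $n\ge4$ by producing a spanning set, and then assemble the three generating-function identities. Your spanning-set argument via the ``sliding relation'' $[v\cdot\ad x_i,w]=-[v,w\cdot\ad x_i]$ is a slightly different packaging of what the paper does (it uses the identity $[[y_1,y_2],[y_3,y_4,y_5]]=-[[y_1,y_2,y_5],[y_3,y_4]]$ directly to move all but two letters into one bracket and push the largest letter deep inside), but the outcome and the count are the same, and your generating-function bookkeeping is correct.

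The real divergence is in claims (2) and (3). The paper does \emph{not} attempt the linear-independence analysis you outline; instead it quotes the structure theorems for the free centre-by-metabelian Lie \emph{ring} over $\Z$ due to Kuz'min~\cite{Kuz77}, with the corrections of Mansuro\v{g}lu--St\"ohr~\cite{ManSto14} and Kov\'acs--St\"ohr~\cite{KovSto14}. Those papers show that $P_n(L'')$ over $\Z$ is (for $n\ge4$) the direct sum of a free abelian group on the multilinear ``Kuz'min elements'' (of which there are $\tfrac{n(n-3)}2$) together with one extra free generator when $n$ is even and one $\Z/2$-summand when $n$ is odd. Passing to a field of characteristic $2$ keeps everything, giving $d_n=\tfrac{n(n-3)}2+1=\binom{n-1}{2}$; in characteristic $\ne2$ the $2$-torsion dies, so $d_n=\binom{n-1}{2}-[n\text{ odd}]$ for $n\ge5$. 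This is exactly the dichotomy you guessed, but obtained by citation rather than by direct rank computation.

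So your proposal is sound, and your honest admission that the rank of the sliding system is ``where the real work lies'' is accurate: carrying out that step from scratch would amount to reproving the main results of~\cite{Kuz77,ManSto14,KovSto14}. If you want a self-contained argument you should look at those references; otherwise, citing them as the paper does is the efficient route.
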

\begin{proof}
Let $L=F(\CC,X,K)$ be the free centre-by-metabelian Lie algebra generated by a set $X$ over a field $K$.
Then $L''=(L^2)^2$ is in the center of $L$ and $L/L''\cong F(\AA^2, X)$. Using
\cite[Lemma 3.3]{Pe95} or \cite[Theorem 3.2]{Pe99JMSciSch_exp}, we get
\begin{equation}\label{cA2}
\CO(L/L'',z)=\CO(\AA^2,z)=1+z+e^z(z-1).
\end{equation}
We use arguments~\cite[4.8.6]{Ba}.
It is well-known~\cite{Ba} that $L'$ is generated by all commutators $w=[x_{i_1},x_{i_2},\ldots,x_{i_n}]$,
$x_{i_j}\in X$, with $i_1>i_2\le i_3\le \cdots \le i_n$.
Now, $L''$ is spanned by their mutual products (see details in~\cite{ManSto14}).
Identity~\eqref{cent_metab} implies $[[y_1,y_2],[y_3,y_4,y_5]]=-[[y_1,y_2,y_5],[y_3,y_4]]$.
We move almost all variables into one commutator and
put the largest variable deep inside that longest commutator. So,   $L''$ is spanned by
\begin{equation}
\begin{split}
&[[x_{i_1},x_{i_2}],[x_{i_3},x_{i_4},\ldots,x_{i_n}]], \\
& i_1>i_2,\quad i_4\le \cdots\le  i_n, \quad i_3=\max\{i_1,\ldots,i_n\}.
\end{split}
\end{equation}
Let us estimate the number of such multilinear monomials. We have $i_3=n$ and choose a $\{i_1,i_2\}$ among $x_1,\ldots,x_{n-1}$.
Hence, $c_n(L'')\le \frac {(n-1)(n-2)}2=d_n$ for $n\ge 4$ and
\begin{multline*}
\CO(L'', z)
 \preceq \sum _{n\ge 4}\frac {(n-1)(n-2)}2 \frac {z^n}{n!}
=\sum _{n\ge 4}\Big (\frac {n(n-1)}2- n+1\Big )\frac {z^n}{n!}
=\frac{z^2}2\sum_{n\ge 2}\frac {z^n}{n!}-z\sum_{n\ge 3}\frac {z^n}{n!}+\sum _{n\ge 4}\frac {z^n}{n!} \\
=\frac{z^2}2\Big (e^z-1-z\Big )
-z\Big (e^z-1-z-\frac {z^2}2\Big)+ \Big (e^z-1-z-\frac {z^2}2 -\frac {z^3}6\Big )
=e^z\Big (\frac {z^2}2-z+1\Big)-1-\frac {z^3} 6.
\end{multline*}
Adding~\eqref{cA2}, we get the first claim.

Now, let $L=F(\CC,X,\Z)$ be the free centre-by-metabelian Lie ring.
Kuzmin proved that $L''$ is a direct sum of a free abelian group and an elementary 2-group~\cite{Kuz77}.
Later, his computations of both the free component and the torsion subgroup were corrected in~\cite{ManSto14} and~\cite{KovSto14}.
Following~\cite{ManSto14}, the monomials
\begin{equation}\label{xi1}
[[x_{i_1},x_{i_2}],[x_{i_3},x_{i_4},x_{i_5},\ldots, x_{i_n}]], \quad x_{i_j}\in X,\quad n\ge 5;\\
\end{equation}
are called {\it Kuzmin elements} if
$i_1>i_2$, $i_3>i_4$, $i_1\ge i_3$,  and $i_4\le i_2\le i_5\le i_6\le \cdots \le i_n$.
Building on the results of~\cite{ManSto14} it was shown in~\cite[Theorem 8.1]{KovSto14}
that the multilinear component $P_n(L'')$ of even degree $n$ is a free abelian group that is freely generated
by the multilinear Kuzmin elements of degree $n$ and the element
$$[[x_3,x_2],[x_4,x_1,x_5, x_6,\ldots, x_{n}]],\quad n\ge 6,$$
while in the case of odd degree $n\ge 5$, the multilinear component $P_n(L'')$ is a direct sum
of a free abelian group that is freely generated by the multilinear Kuzmin elements of degree $n$ and a cyclic
group of order 2 that is generated by the element
$$
[[x_1,x_2],[x_3,x_4,\ldots,x_n]]+ [[x_2,x_3],[x_1,x_4,\ldots,x_n]]+ [[x_3,x_1],[x_2,x_4,\ldots,x_n]].
$$
The component of degree 4 was not considered in~\cite{Kuz77,ManSto14}.
The identity~\eqref{cent_metab} is of degree 5, so $P_4(L'')$ is the same as that for the free Lie ring, namely
$\langle [[x_{\pi(1)},x_{\pi(2)} ],[x_{\pi(3)},x_{\pi(4)}]] \mid \pi\in S_4\rangle_\mathbb Z$,
which has rank 3. Its basis corresponds to two Kuzmin elements~\eqref{xi1} and the additional element above.
Thus, the description above applies to the degree 4 as well.


Now consider the case of the algebra over a field $K$.
The number of multilinear Kuzmin elements~\eqref{xi1} is equal to $\frac {n(n-3)}2$, see~\cite[Lemma 7.1, Theorem 7.1]{ManSto14}.
In case $\ch K=2$, by arguments above each multilinear component obtains one more element,
we get the same formula for all integers (this fact follows also from~\cite{KovSto14})
$c_n(L'')=1+\frac {n(n-3)}2=\frac {(n-1)(n-2)}2=d_n$ for $n\ge 4$, the same value as above, yielding the second claim.
Let $\ch K\ne 2$, then the 2-torsion elements disappear, the latter having the complexity function
\begin{equation*}
\sum_{k\ge 2} \frac{z^{2k+1}}{(2k+1)!}=\sh(z)-z-\frac {z^3}{6}.
\end{equation*}
It remains to subtract this value from the function of claim 2.
(These values  $c_n(L'')=\frac{n(n-3)}2$ or $c_n(L'')=\frac{(n-1)(n-2)}2$ for odd and even $n\ge 4$
are also shown in~\cite[4.8.6]{Ba} in case of $\C$).
\end{proof}

\section{Codimension growth of solvable Poisson algebras}

Now we show that there exist Poisson PI-algebras, satisfying some Lie identical relations,
that  correspond to the functions of our scale for the codimension growth~\eqref{scale}.
We shall use the following recent result of S.Siciliano and H.Usefi.
\begin{Th}[{\cite[Theorem 3.3]{SicUse21}}]\label{TSicUse}
Let $P$ be a Poisson algebra that is Lie solvable of length~$n$.
Then the Poisson ideal generated by all elements
$\{\{\{x_1,x_2\},\{x_3,x_4\}\},x_ 5\}$ is
associative nilpotent of index bounded by a function of~$n$.
\end{Th}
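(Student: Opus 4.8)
The plan is to prove the theorem by induction on the Lie solvability length $n$, peeling along the Lie derived series and exploiting that a Lie solvable Poisson algebra satisfies a customary identity in the sense of Farkas. It suffices to treat the relatively free Poisson algebra $P$ that is Lie solvable of length $n$, since an arbitrary such algebra is a Poisson homomorphic image of it. Write $\delta_1=\{P,P\}$ and $\delta_{k+1}=\{\delta_k,\delta_k\}$ for the Lie derived series; a routine Jacobi computation shows that each $\delta_k$ is not just a Lie subalgebra but a Lie ideal of $P$, so that $\{\delta_2,P\}\subseteq\delta_2$. As the $x_i$ range over $P$, the elements $\{\{\{x_1,x_2\},\{x_3,x_4\}\},x_5\}$ span $\{\delta_2,P\}$; denoting by $I$ the Poisson ideal they generate, the goal is to produce $f(n)$ with $I^{f(n)}=0$, the powers being associative.

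The first tool is Farkas' theory~\cite{Farkas98}: since $P$ satisfies the multilinear solvability Lie identity of degree $2^n$, it satisfies a customary identity of some degree $g(n)$, that is, a relation expressing the product of $g(n)$ brackets $\{y_1,y_2\}\cdots\{y_{2g(n)-1},y_{2g(n)}\}$ as a linear combination of the remaining admissible arrangements of the same variables; this is what makes long products of brackets collapse. The second tool is the layer reduction. For $n\ge 3$ let $J$ be the Poisson ideal generated by the abelian layer $\delta_{n-1}$; because $\delta_{n-1}$ is a Lie ideal, $J$ equals the associative ideal $P\,\delta_{n-1}$, and $P/J$ is Lie solvable of length at most $n-1$. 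Applying the inductive hypothesis to $P/J$ yields $I^{f(n-1)}\subseteq J$, so everything reduces to bounding the associative nilpotency index of $J=P\,\delta_{n-1}$ by a function of $n$; then $f(n)=f(n-1)\cdot h(n)$, where $J^{h(n)}=0$, closes the induction.

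To bound $h(n)$ I would show that $\delta_{n-1}$ is associatively nil of index controlled by $n$: its elements are brackets $\{a,b\}$ with $a,b\in\delta_{n-2}$, and in a product of many such one uses the Leibniz rule repeatedly to trade associative factors for brackets, reducing the vanishing of a long product to the vanishing of sufficiently long products of brackets, which is forced by the customary identity together with the pure Lie relations implicit in $\delta_n=0$; a bounded-index nil commutative ideal over the ground field is then nilpotent (Nagata--Higman, or directly by linearization, the small-characteristic case needing separate care). The main obstacle is precisely making this heart of the argument quantitative: one must track, through successive customary-identity rewrites and Leibniz expansions, which term $\delta_k$ of the derived series every intermediate monomial belongs to, so that the deeper contributions are genuinely absorbed and the process terminates with an explicit $h(n)$. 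I expect this needs a secondary induction — on the degree of the customary identity, or on a weight measuring how far a monomial is from $\delta_n$ — rather than a single estimate.
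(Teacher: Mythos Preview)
This theorem is not proved in the present paper at all; it is quoted verbatim from Siciliano--Usefi~\cite{SicUse21} and immediately used as a black box in the proof of Theorem~\ref{TPpoly}. There is therefore no argument here to compare your proposal against.

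On the proposal itself: the outer inductive shell is fine. Each $\delta_k$ is a Lie ideal of $P$, so $J=P\,\delta_{n-1}$ is a Poisson ideal, $P/J$ has Lie derived length at most $n-1$, and the inductive hypothesis correctly yields $I^{f(n-1)}\subseteq J$. But this pushes the entire content of the theorem into the claim $J^{h(n)}=0$, and your sketch for that step does not work. You propose to ``trade associative factors for brackets'' via the Leibniz rule, but Leibniz runs the other way: $\{ab,c\}=a\{b,c\}+b\{a,c\}$ converts a bracket of a product into a sum of products, it does not convert an associative product $w_1\cdots w_h$ into a Lie expression. Nor does the customary identity kill long products of brackets; it only expresses one ordered product $\{y_1,y_2\}\cdots\{y_{2g-1},y_{2g}\}$ as a linear combination of the other orderings of the same variables. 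You have not explained any mechanism by which the Lie relation $\delta_n=0$ combines with this to force a long associative product of elements of $\delta_{n-1}$ to vanish. Finally, even granting a bounded nil index, Nagata--Higman fails in characteristic $p$ not exceeding that index, which you acknowledge but do not address, while the theorem is asserted over an arbitrary field. In short, your reduction is valid, but the residual statement $J^{h(n)}=0$ is essentially as hard as the original theorem, and the argument offered for it is a hope rather than a proof.
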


\begin{proof}[Proof of Theorem~\ref{TPpoly}]
We start with a general remark. 
Denote by $X$ a countable set.
Let $\VV$ be a multilinear variety of Lie algebras. Consider the free Poisson algebra
$F(X)=S(L(X))$ and the relatively free Poisson algebra,  defined by all Lie identities of $\VV$,
as the factor algebra by the verbal ideal $F(\PP\VV,X)=F(S(L(X)))/\VV(F(S(L(X))))$,
see definitions and properties in~\cite{Ba,Drensky}).
Let $h(X_1,\ldots,X_m)\in\VV$ be a multilinear Lie identity.
By substitutions of Lie elements $X_i=a_i\in L(X)$, $i=1,\ldots,m$ we get $h(a_1,\ldots,a_m)\in \VV(L(X))$.
If at least one of the elements above is a product $a_1=b_1\cdots b_k$, $b_i\in L(X)$, where $k\ge 2$,
then by Leibnitz rule, $h(b_1\cdots b_k,a_2,\ldots,a_m)\in (L(X))^k S(L(X))$.
Let $\tilde L(X)\subset F(\PP\VV,X)$ be the subalgebra generated by $X$ using Lie bracket only, then 
by our arguments $\tilde L(X)\cong F(\VV,X)$, the relatively free algebra of the variety of Lie algebras $\VV$.

Now let $P:=F(\PP\NN_{s_q}\cdots\NN_{s_1},X)$ be
the relatively free Poisson algebra defined by
the identity of Lie polynilpotency corresponding to the tuple $(s_q,\ldots,s_1)$.
Consider the Lie subalgebra $L$ generated by $X$. By arguments above, 
$L\cong F(\NN_{s_q}\cdots\NN_{s_1},X)\subset P$.
Consider the verbal ideal determined by the centre-by-metabelian identity
(i.e. the ideal generated by all substitutions of elements of $L$ into identity~\eqref{cent_metab}). 
We obtain a multihomogeneous ideal $H:=\CC(L)\triangleleft L$.
Choose well-ordered multihomogeneous bases $\{v_i| i\in I\}$  for $L/H$ and $\{ w_j| j\in J\}$ for $H$.
The following monomials span the Poisson algebra $P$:
\begin{equation}\label{wwww}
P=\langle v_{i_1}\cdots v_{i_a} w_{j_1}\cdots w_{j_b}\mid  i_s\in I, j_s\in J,
i_1\le \cdots\le  i_a, j_1\le  \cdots \le j_b,\ a,b\ge 0 \rangle_K.
\end{equation}
To evaluate the number of the first factors above remark that $L/H\in \CC$, so
$\CO(L/H,z)\preceq \CO(\CC,z)$, and use Theorem~\ref{Texp}.
Denote $f(z)=\CO(L,z)=\CO(\NN_{s_q}\cdots\NN_{s_1},z)$.
Since $L$ is solvable, by Theorem~\ref{TSicUse}, there exists $N$ such that $b\le N$ in~\eqref{wwww}.
Using machinery of exponential generating functions (Lemma~\ref{Lprod}),
and  the  bound on the number of the seconds factors~\eqref{wwww}, we get
\begin{equation}\label{coP}
\CO(P,z)\preceq \exp(\CO(L/H,z)) (\CO(H,z))^N\preceq \exp(\CO(\CC,z)) (\CO(L,z))^N
\preceq \exp(\CO(\CC,z)) (f(z))^N .
\end{equation}
Below, we substitute $z=r\in\R^+$ into the series with nonnegative coefficients, turning $\preceq$ into $\le $.
By Theorem~\ref{TordNN}, we have a bound
\begin{equation}\label{frr}
f(r)\le \exp^{(q-1)}\Big(\Big(\frac {s_2}{s_1}+o(1)\Big)r^{s_1} \Big),\quad r\to+\infty.
\end{equation}
By~\eqref{coP}, \eqref{frr}, Theorem~\ref{Tcenterby}, and using that $q\ge 3$, we get
\begin{align*}
\ln\CO(P,r)&\le \CO(\CC,r)+N \ln f(r)\le \frac {r^2}2 \exp(r)+r+N \exp^{(q-2)}\Big(\Big(\frac {s_2}{s_1}+o(1)\Big)r^{s_1} \Big)\\
&\le r^2\exp^{(q-2)}\Big(\Big(\frac {s_2}{s_1}+o(1)\Big)r^{s_1} \Big),\qquad r\to+\infty;\\
&\lim_{r\to+\infty}
 \frac{\ln^{(q-1)}{\CO(P,r)}}{r^{s_1}}=\frac{s_2}{s_1}.
\end{align*}
Recall that $\CO(P,z)=\sum_{n=0}^\infty \frac {c_n(P)}{n!}z^n$. Applying Theorem~\ref{TSher}, where  $\lambda=s_1$, we get

\begin{align*}
\frac{s_2}{s_1}&=\limsup_{r\to+\infty} {\frac {\ln^{(q-1)} {\CO(P,r)}} {r^{s_1}}}
= \limsup_{n\to\infty} \Big(\frac {c_n(P)}{n!}\Big)^{s_1/n} \ln^{(q-2)}n;\\
&\Big(\frac {c_n(P)}{n!}\Big)^{s_1/n} \ln^{(q-2)}n\le \frac{s_2}{s_1}+o(1),\qquad n\to\infty;\\
c_n(P)&\le
  \displaystyle
  \frac{n!}{(\ln^{(q-2)}n)^{n/s_1}}
                  \Big(\frac{s_2+o(1)}{s_1}
                  \Big)^{n/s_1},\qquad  n\to\infty.
\end{align*}
Finally, the lower bound follows by $c_n(\PP\NN_{s_q}\cdots\NN_{s_1})=c_n(P)\ge c_n(L)=c_n(\NN_{s_q}\cdots\NN_{s_1})$
and the lower bound of Theorem~\ref{Tpoly}.
\end{proof}

\section{Wild codimension growth in case of mixed identities}

In this section we show that
the codimension growth of Poisson PI-algebras without Lie identities can be very high.
Recall that~\eqref{cofrrP}, the codimension growth of the free Poisson algebra is $c_n(\PP)=n!$.
Unlike varieties with Lie identities (see our Theorem~\ref{TupperP}),
there exist Poisson algebras with mixed identities
such that $c_n(\WW_s)\ge (n-1)!$, see below.

Let us compute the complexity function of the
variety of Poisson algebras $\WW_s$,  defined by the mixed identity:
\begin{equation}\label{xxxS}
\{X_1,X_2\}\cdots \{X_{2s-1},X_{2s}\}\equiv 0, \quad s=2,3,\ldots.
\end{equation}
\begin{Th} \label{TWs}
The complexity function of the variety of Poisson algebras $\WW_s$ is equal to
\begin{enumerate}
\item
$\displaystyle \CO(\WW_s)=\exp(z)\bigg(1+\sum_{k=1}^{s-1} \frac 1{k!}\big(-z-\ln(1-z)\big)^k \bigg).$
\item
$\displaystyle \CO(\WW_s)= \frac1{1-z}- \exp(z)\bigg(\sum_{k=s}^{\infty}  \frac 1{k!} \Big(\sum_{n=2}^\infty \frac{z^n}n\Big)^k \bigg).$
\end{enumerate}
\end{Th}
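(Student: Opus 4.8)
The plan is to compute $\CO(\WW_s,z)$ directly by identifying a spanning set of multilinear monomials in the relatively free Poisson algebra $F(\WW_s,X)$ and then translating the combinatorics into exponential generating functions via Lemma~\ref{Lprod} and Theorem~\ref{Texp}. Recall that the free Poisson algebra is $F(X)=S(L(X))$, so a multilinear monomial is a product of Lie monomials $\ell_1\cdots\ell_k$ on disjoint sets of variables, where each $\ell_j\in L(X)$ is a multilinear Lie commutator; I may take the $\ell_j$ to be Lyndon--Shirshov basis elements and, since the factors commute, assume the product is taken in a fixed order. The identity~\eqref{xxxS} says that a product of $s$ or more Lie brackets of degree~$\ge 2$ vanishes; more precisely, after linearization it kills every multilinear monomial $\ell_1\cdots\ell_k$ in which at least $s$ of the factors $\ell_j$ have degree $\ge 2$ (the degree-one factors, i.e.\ bare variables, are unaffected, and a single bracket $\{x_i,x_j\}$ counts as one factor of degree~$2$). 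So the surviving multilinear monomials are exactly products $\ell_1\cdots\ell_k$ on a partition of $\{x_1,\dots,x_n\}$ in which \emph{at most} $s-1$ blocks have size $\ge 2$; one must also check that these are linearly independent, which follows because distinct such products are distinct PBW monomials in $S(L(\VV,X))$ for the appropriate $\VV$ — here $\VV$ is all Lie algebras since~\eqref{xxxS} is a mixed, not a Lie, identity, so $\tilde L(X)\cong L(X)$ is free.

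Next I would assemble the generating function. A bare variable contributes the set $X$ with $\CO(X,z)=z$. A single Lie factor of degree $\ge 2$ ranges over the multilinear part of $L(X)$ of degree $\ge 2$; since $\CO(L(X),z)=\CO(\LL,z)=-\ln(1-z)=\sum_{n\ge 1}z^n/n$ by~\eqref{complL}, removing the degree-one term gives $g(z):=-z-\ln(1-z)=\sum_{n\ge 2}z^n/n$ for the exponential generating function of a single ``long'' Lie factor. Because the monomials are unordered products of distinguishable factors, Lemma~\ref{Lprod} combined with the PBW/Theorem~\ref{Texp} symmetrization shows that the exponential generating function of an unordered collection of $j$ bare variables and $k$ long Lie factors is $\frac{z^j}{j!}\cdot\frac{g(z)^k}{k!}$; summing $j\ge 0$ freely and $k$ from $0$ to $s-1$ yields
\begin{equation*}
\CO(\WW_s,z)=\Big(\sum_{j\ge 0}\frac{z^j}{j!}\Big)\sum_{k=0}^{s-1}\frac{g(z)^k}{k!}=\exp(z)\bigg(1+\sum_{k=1}^{s-1}\frac1{k!}\big(-z-\ln(1-z)\big)^k\bigg),
\end{equation*}
which is Claim~1. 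For Claim~2, I use that the \emph{full} free Poisson algebra has $\CO(\PP,z)=1/(1-z)$ by~\eqref{cofrrP}, and that without any truncation the same bookkeeping gives $1/(1-z)=\exp(z)\sum_{k\ge 0}g(z)^k/k!=\exp(z)\exp(g(z))$ (indeed $\exp(z)\exp(-z-\ln(1-z))=1/(1-z)$, a useful sanity check); subtracting the terms with $k\ge s$ from this identity gives exactly $\CO(\WW_s,z)=\frac1{1-z}-\exp(z)\sum_{k\ge s}\frac1{k!}\big(\sum_{n\ge 2}z^n/n\big)^k$.

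The main obstacle is the linear independence of the proposed spanning set — i.e.\ showing that~\eqref{xxxS} and its linearizations kill \emph{only} the monomials with $\ge s$ long factors and impose no further relations among the rest. I expect to handle this by passing to the symmetric algebra picture: $F(\WW_s,X)$ is a quotient of $F(X)=S(L(X))$ by the verbal ideal of~\eqref{xxxS}, which is a monomial-type ideal with respect to the PBW basis of $S(L(X))$ coming from a Lyndon--Shirshov basis of $L(X)$ (the identity $\{X_1,X_2\}\cdots\{X_{2s-1},X_{2s}\}$ is multilinear and its substitutions span precisely the span of PBW monomials having at least $s$ factors of degree $\ge 2$, because the Poisson bracket of two Lie elements is again a Lie element and the product structure is the commutative polynomial one). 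Hence the quotient has as a basis exactly the complementary set of PBW monomials, giving both the upper and lower bounds on $c_n(\WW_s)$ and pinning down the generating function. A secondary technical point is the careful treatment of the ``order'' convention for the commuting factors so that Lemma~\ref{Lprod} and the $1/k!$ symmetrization factors are applied consistently; this is routine once the basis is fixed.
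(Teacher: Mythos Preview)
Your proposal is correct and follows essentially the same approach as the paper: identify the verbal ideal of~\eqref{xxxS} in $F(X)=S(L(X))$ as the span of PBW monomials having at least $s$ Lie factors of degree~$\ge 2$ (using Leibnitz to show both containments), take the complementary PBW monomials as a basis of $F(\WW_s,X)$, and compute the complexity function as $\exp(z)\sum_{k=0}^{s-1}h(z)^k/k!$ with $h(z)=-z-\ln(1-z)$, then subtract from $1/(1-z)$ for Claim~2. The only cosmetic differences are your use of the Lyndon--Shirshov basis where the paper uses a Hall basis, and your slightly more explicit discussion of the independence/ideal step.
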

\begin{proof} Let $R$ be a Hall basis of the free Lie algebra $L(X)\subset F(X)=S(L(X))$ and
$R=\cup_{n\ge 1} R_n$, where $R_n$ are monomials of length $n$ in the generators $X$.
In particular, $R_1=X$, and $\CO(R_1,z)=z$.
Consider the commutators subalgebra: $L'=[L,L]=\langle R_n| n\ge 2\rangle _K$.
Using~\eqref{complL}, we get a function with non-negative coefficients:
\begin{equation}\label{hz}
h(z):=\CO(L',z)=\CO(L,z)-\CO(X,z)=\sum_{n=2}^\infty \frac{z^n}n=-z-\ln(1-z).
\end{equation}

The free Poisson algebra $F(X)$ has the PBW-basis
\begin{equation}\label{free}
x_{i_1}\cdots x_{i_a}\cdot w_{j_1}\cdots w_{j_b},\qquad x_{i_j}\in X,\quad  w_{i_j}\in \mathop{\cup}\limits_{n\ge 2} R_n,
\end{equation}
where all multiplicands above are nondecreasing with respect to a liner order on $R(X)$.
We consider multilinear elements above.

We claim that the verbal ideal $\WW_s(F(X))$ is spanned by monomials~\eqref{free} with $b\ge s$.
First, clearly, all such products belong to the verbal ideal.
Second, consider a substitution into $\{X_1,X_2\}$ of products~\eqref{free}.
By Leibnitz rule we get a sum of products each containing a factor $\{w_i,w_j\}$, where $w_i,w_j\in R$,
thus yielding a factor $w_l\in \cup_{n\ge 2} R_2$.
So, the verbal ideal contains at least $s$  such factors.
Finally, a basis of $F(\WW_s,X)\cong F(X)/\WW_s(F(X))$ consists of monomials~\eqref{free} with $b=0,1,\dots,s-1$.

Consider a formal set of nonordered(!) products of length $k$ and use Lemma~\ref{Lprod} and~\eqref{hz}:
\begin{align*}
U_k:&=\{w_{i_1}\cdots w_{i_k}\mid w_{i_j}\in \mathop{\cup}_{n\ge 2} R_n\};\\
\CO(U_k,z)&=\CO(\cup_{n\ge 2} R_n, z)^k=\CO(L',z)^k=h(z)^k.
\end{align*}
Now we pass to ordered multilinear products:
\begin{align}\nonumber
\tilde U_k:&=\{w_{i_1}\cdots w_{i_k}\mid w_{i_j}\in \cup_{n\ge 2} R_n, \quad i_1<\cdots< i_k\};\\
\CO(\tilde U_k,z)&= \frac{1}{k!} \CO(U_k,z) =\frac{1}{k!} h(z)^k.
\label{coUk}
\end{align}
By~\eqref{commutative}, the first factor~\eqref{free} has the complexity function $\exp(z)$.
We count the complexity function of the second factor in~\eqref{free} summing~\eqref{coUk} for $k=0,\ldots,s-1$
and using~\eqref{hz}, thus yielding the first claim.

Finally,
\begin{align*}
\CO(\WW_s),z&=\exp(z)\Big (1+\sum_{k=1}^{s-1} \frac  {h^k(z)}{k!} \Big)=
\exp(z)\Big (\sum_{k=0}^{\infty } \frac  {h^k(z)}{k!}- \sum_{k=s}^{\infty } \frac  {h^k(z)}{k!}\Big)\\
&=\exp(z) \Big(\exp (h(z))- \sum_{k=s}^{\infty } \frac  {h^k(z)}{k!}\Big)
=\frac 1{1-z}-\exp(z)\bigg(\sum_{k=s}^{\infty}  \frac 1{k!} \Big(\sum_{n=2}^\infty \frac{z^n}n\Big)^k \bigg).\qedhere
\end{align*}
\end{proof}

{\it Examples.} We provide some computations of the series:
\begin{align*}
\CO(\WW_2,z)&=1+z+z^2+z^3+\frac 78 z^4+\frac{17}{24}z^5+\frac{41}{72}z^6+\frac{169}{360}z^7
+\frac{51}{128}z^8+ \frac{25133}{72576}z^9 + \frac{556037}{1814400}z^{10}+\cdots\\
\CO(\WW_3,z)&=1+z+z^2+z^3+z^4+z^5+\frac{47}{48}z^6+\frac{15}{16}z^7+\frac{1021}{1152}z^8+\frac{43249}{51840}z^9+\frac{509}{648}z^{10}+\cdots\\
\CO(\WW_4,z)&=1+z+z^2+z^3+z^4+z^5+z^6+z^7+\frac{383}{384}z^8+\frac{1141}{1152}z^9+\frac{5641}{5760}z^{10}+\cdots\\
\CO(\WW_5,z)&=1+z+z^2+z^3+z^4+z^5+z^6+z^7+z^8+z^9+  \frac{3839}{3840}z^{10}+\cdots
\end{align*}

We obtain a version of the bound of S.Ratseev~\cite[Theorem 1]{Rats14}.
\begin{Corr}
$c_n(\WW_s)\ge [e(n-1)!]-1$ for $s\ge 2$ , where $e=2,71828...$.
\end{Corr}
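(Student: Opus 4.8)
The plan is to reduce everything to the variety $\WW_2$ and then carry out one elementary estimate. In Theorem~\ref{TWs}(1) the series $-z-\ln(1-z)=\sum_{n\ge 2}z^n/n$ has nonnegative coefficients, hence so does each power $\frac1{k!}\bigl(-z-\ln(1-z)\bigr)^k$ and, after multiplication by $\exp(z)$, so does each summand of $\CO(\WW_s,z)$. Keeping only the terms $k=0$ and $k=1$ therefore gives, for every $s\ge 2$, the coefficientwise bound
\[
\exp(z)\bigl(1-z-\ln(1-z)\bigr)\ =\ \exp(z)\Bigl(1+\sum_{n\ge 2}\tfrac{z^n}{n}\Bigr)\ \preceq\ \CO(\WW_s,z);
\]
equivalently $c_n(\WW_s)\ge c_n(\WW_2)$, which is also immediate from $\WW_2\subseteq\WW_s$. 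Extracting the coefficient of $z^n$ and multiplying by $n!$ I would obtain
\[
c_n(\WW_s)\ \ge\ 1+\sum_{m=2}^{n}\frac{n!}{m\,(n-m)!},
\]
the sum being empty (so that the bound just reads $c_n(\WW_s)\ge 1$) when $n\le 1$.

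Next I would compare the right-hand side with $[e(n-1)!]$, where $[\,\cdot\,]$ denotes the integer part. Substituting $k=n-m$ one has $\sum_{m=2}^{n}\frac{n!}{m(n-m)!}=\sum_{k=0}^{n-2}\frac{n}{n-k}\cdot\frac{(n-1)!}{k!}\ \ge\ \sum_{k=0}^{n-2}\frac{(n-1)!}{k!}$, since $\frac{n}{n-k}\ge 1$ for $0\le k\le n-2$. For $n\ge 2$ the tail $\sum_{k\ge n}\frac{(n-1)!}{k!}$ is bounded by $\frac1n\cdot\frac{1}{1-1/(n+1)}=\frac{n+1}{n^2}<1$, so $\sum_{k=0}^{n-1}\frac{(n-1)!}{k!}=[e(n-1)!]$ and hence $\sum_{k=0}^{n-2}\frac{(n-1)!}{k!}=[e(n-1)!]-1$. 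Combining, $c_n(\WW_s)\ge [e(n-1)!]$ for $n\ge 2$ --- in fact slightly more than asserted --- while for $n=1$ one checks directly that $c_1(\WW_s)\ge 1=[e\cdot 0!]-1$. Thus the stated inequality holds for all $n\ge 1$.

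I do not expect a genuine obstacle here: the whole argument is bookkeeping with exponential generating functions together with the single elementary fact that $[e(n-1)!]$ equals the partial sum $\sum_{k=0}^{n-1}(n-1)!/k!$. The only point needing a moment's attention is the boundary case $n=1$, where that identity fails (the tail $e-1$ exceeds $1$); this is precisely why the statement carries the harmless ``$-1$''. As a sanity check one may also compare with the tabulated coefficients of $\CO(\WW_2,z)$ displayed above, which show the bound is attained by $\WW_2$ at $n=2$ and $n=4$.
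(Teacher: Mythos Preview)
Your proof is correct and follows essentially the same route as the paper: reduce to $\WW_2$ via $\CO(\WW_s,z)\succeq\CO(\WW_2,z)=\exp(z)\bigl(1-z-\ln(1-z)\bigr)$, expand the Cauchy product, and bound $\frac{1}{m(n-m)!}\ge\frac{1}{n(n-m)!}$ (which is exactly the paper's step rewritten in your variable $k=n-m$ as $\frac{n}{n-k}\ge 1$); you then invoke the standard identity $[e(n-1)!]=\sum_{k=0}^{n-1}(n-1)!/k!$, whereas the paper phrases the same endgame as $c_n\ge (n-1)!\bigl(e-\tfrac1{(n-1)!}-\sum_{k>n}\tfrac1{k!}\bigr)$. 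Your observation that for $n\ge2$ one actually gets $c_n(\WW_s)\ge[e(n-1)!]$ is correct and slightly sharper than stated. One small slip in your closing ``sanity check'': from the displayed series $c_4(\WW_2)=4!\cdot\tfrac78=21$ while $[6e]=16$, so the bound is \emph{not} attained at $n=4$; this remark is inessential to the argument.
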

\begin{proof}
By Theorem,
\begin{multline*}
\CO(\WW_s,z)\succeq \CO(\WW_2,z) = \exp(z)(1-z-\ln(1-z))
=\Big(\sum_{n\ge 0} \frac {z^n} {n!}\Big )\Big(1+\sum_{n\ge 2}\frac {z^n}n\Big )\\
=1+z+\sum_{n=2}^\infty z^n\Big (\frac 1{n\cdot 0!}+ \frac 1{(n-1)1!}+\cdots+ \frac 1{2(n-2)!}  +\frac 1{n!}\Big)\\
\succeq 1+z+\sum_{n=2}^\infty \frac{z^n} n\Big(\frac 1{0!}+\frac 1{1!}+\cdots +\frac 1{(n-2)!}+ \frac 1{n!}\Big).
\end{multline*}
Hence,
$c_n(\WW_s)\ge (n-1)!(e-\frac 1{(n-1)!}-\sum_{k> n}\frac 1{k!})\ge
[(n-1)!e]-1$.
\end{proof}



\end{document}